
\documentclass{amsproc}
\usepackage{amssymb}
\usepackage{amsfonts}

\setcounter{MaxMatrixCols}{10}

\theoremstyle{plain}
\newtheorem{acknowledgement}{Acknowledgement}

\newtheorem{lemma}{Lemma}

\newtheorem{remark}{Remark}

\newtheorem{theorem}{Theorem}
\numberwithin{equation}{section}

\begin{document}
\title{Lax-Halmos Type Theorems On $H^{p}$ Spaces}
\author{Niteesh Sahni}
\address{University of Delhi, Delhi, 110007\\
Shiv Nadar University, Dadri, Uttar Pradesh}
\email{niteeshsahni@gmail.com}
\author{Dinesh Singh}
\address{University of Delhi, Delhi, 110007}
\email{dineshsingh1@gmail.com}
\date{May 16, 2012}
\subjclass[2000]{Primary 05C38, 15A15; Secondary 05A15, 15A18}
\keywords{Finite Blaschke factor $B$, subalgebra of $H^{\infty }$ generated
by $B^{2}$ and $B^{3},$ invariant subspace, $H^{p}$}

\begin{abstract}
In this paper we characterize for $0<p\leq \infty $, the closed subspaces of 
$H^{p}$ that are invariant under multiplication by all powers of a finite
Blaschke factor $B$, except the first power. Our result clearly generalizes
the invariant subspace theorem obtained by Paulsen and Singh $[9]$ which has
proved to be the starting point of important work on constrained
Nevanlinna-Pick interpolation. Our method of proof can also be readily
adapted to the case where the subspace is invariant under all positive
powers of $B\left( z\right) .$ The two results are in the mould of the
classical Lax-Halmos Theorem and can be said to be Lax-Halmos type results
in the finitre multiplicity case for two commuting shifts and for a single
shift respectively.
\end{abstract}

\maketitle

\section{INTRODUCTION}

\noindent In recent times a great deal of interest has been generated in the
Banach algebra $H_{1}^{\infty }$ and subsequently also in a class of related
algebras in the context of problems dealing with invariant subspaces and
their use in solving Nevanlinna-Pick type interpolation problems for these
algebras. We refer to \cite{dh}, \cite{dprs}, \cite{jury}, \cite{mc}, \cite%
{paul}, and \cite{RS}. Note that $H_{1}^{\infty }=\{f(z)\in H^{\infty
}:f^{\prime }(0)=0\}$ is a closed subalgebra of $H^{\infty }$, the Banach
algebra of bounded analytic functions on the open unit disc. The starting
point, in the sequence of papers cited above, is an invariant subspace
theorem first proved by Paulsen and Singh in a special case \cite[Theorem 4.3%
]{paul} and subsequently in more general forms in \cite{dprs} and \cite%
{mrin2}. This invariant subspace result is crucial to the solutions of
interpolation problems of the Pick-Nevanlinna type as presented in the
papers cited above. This theorem characterizes the closed subspaces of the
Hardy spaces that are left invariant under the action of every element of
the algebra $H_{1}^{\infty }$. In this paper we present a far reaching
generalization of this invariant subspace theorem by presenting a complete
characterization of the invariant subspaces of the Banach algebra $%
H_{1}^{\infty }(B)=\{f(B(z)):f\in H_{1}^{\infty };B\text{ is a finite
Blaschke product}\}.$ In the special case where $B(z)=z$ we arrive at the
first or original invariant subspace theorem mentioned above for the Banach
algebra $H_{1}^{\infty }$. We also note that $H_{1}^{\infty }(B)$ stands for
the closed subalgebra of $H^{\infty }$ generated by $B^{2}$ and $B^{3}$. Our
paper also deals with a second and related problem of characterizing the
invariant subspaces on $H^{p}$ {\large of the algebra }$H^{\infty }(B)$
which consists of the Banach algebra generated by$B.$ For the case $p=2$,
this problem has been tackled in the far more general setting of de Branges
spaces in \cite{sinth} and in the same year, for the classical $H^{p}$spaces
for all values of $p\geq 1,$ this problem has been tackled in \cite{lance}.
However, for this second problem, we claim some novelty and completeness on
two counts; for one we have shown that in the case when $0<p<1$ we have an
explicit description of the invariant subspaces, and second, our proof is
more elementary and different from that in \cite{lance} since we do not use
their general inner-outer factorization theorem \cite[page 112]{lance}.

Finally, we wish to observe that the two main results presented in this
paper can be interpreted as being in the mould of the classical Lax-Halmos
Theorem in the case of finite multiplicity, see \cite{hal},\cite{hel}, and 
\cite{lax}, for two commuting shifts \ as represented by multiplication by $%
B^{2}$ and by $B^{3}$ and for a single shift represented by multiplication
by $B$ except that unlike the classical versions of the Lax-Halmos theorem
we work entirely in the scalar valued setting of the classical Hardy spaces
and our characterisations are also inside this scalar setting.

Let $\mathbb{D}$ denote the open unit disk, and let its boundary, the unit
circle, be denoted by $\mathbb{T}$. The Lebesgue space $L^{p}$ on the unit
circle is the collection of complex valued functions $f$ on the unit circle
such that $\int |f|^{p}dm$ is finite, where $dm$ is the normalized Lebesgue
measure on $\mathbb{T}$. The Hardy space $H^{p}$ is the closure in $L^{p}$
of the analytic polynomials. For $p\ge1$, $H^{p}$ can be viewed as the the
following closed subspace of $L^{p}$: 
\begin{equation*}
\left\{ f\in L^{p}:\int {fz^{n}}dm=0~\text{\ for all }n\geq 1\right\} .
\end{equation*}

\noindent {\large For }$1\leq p<\infty $, $H^{p}$ is a Banach space under
the norm 
\begin{equation*}
\Vert f\Vert _{p}=\left( \int {|f|^{p}}dm\right) ^{\frac{1}{p}}.
\end{equation*}

\noindent $H^{\infty }$ is a Banach algebra under the essential supremum
norm. The Hardy space $H^{2}$ turns out to be a Hilbert space under the
inner product 
\begin{equation*}
<f,g>=\int {f\overline{g}}dm.
\end{equation*}%
For a detailed account of $H^{p}$ spaces the reader can refer to \cite{dur}, 
\cite{gar}, \cite{hoff}, and \cite{koosis}.

\noindent By a finite Blaschke factor $B(z)$ we mean 
\begin{equation*}
B(z)=\prod_{j=1}^{n}\dfrac{z-\alpha _{j}}{1-\overline{\alpha _{j}}z},
\end{equation*}%
where $\alpha _{j}\in \mathbb{D}$. Throughout we shall assume $\alpha _{1}=0$
as this does not affect generality. The operator of multiplication by $B(z)$
denoted by $T_{B}$ is an isometric operator on $H^{p}$. We call a closed
subspace $M$ of $H^{p}$ to be $B$ -invariant if $T_{B}M\subset M$. By an
invariant subspace $M$ (in $H^{p}$) of any subalgebra $A$ of $H^{\infty }$
we mean a closed subspace of $H^{p}$ such that $fg\in M$ for all $f\ in$ $A$
and for all $g\in M$.

\noindent Let $B_{j}(z)$ denote the product of the first $j$ factors in $%
B(z) $: 
\begin{equation*}
B_{j}(z)=\prod_{i=1}^{j}\dfrac{z-\alpha _{i}}{1-\overline{\alpha _{i}}z}%
,~\alpha _{j}\in \mathbb{D}.
\end{equation*}

\section{Preliminary Results}

Note: \emph{We shall assume throughout this paper that the Blaschke product
B so indicated is fixed and has $n$ zeros that may not necessarily be
distinct}.

\begin{theorem}\label{thukral}
(\cite[Theorem 3.3]{sinth}) The collection $\ \{e_{jm}=(1-|\alpha
_{j+1}|^{2})^{\frac{1}{2}}(1-\overline{\alpha _{j+1}}z)^{-1}B_{j}B^{m}:0\leq
j\leq n-1,m=0,1,2,\ldots \}$ is an orthonormal basis for $H^{2}$.
Consequently $H^{2}=\sum_{j=0}^{n-1}\oplus e_{j0}H^{2}(B)$ where $H^{2}(B)$
stands for the closed linear span of $\{B^{m}:m=0,1,2,...\}$ in $H^2$.
\end{theorem}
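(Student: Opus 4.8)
The plan is to build the orthonormal system explicitly from the Takenaka--Malmquist (Walsh) system associated with the zeros of $B$, and then verify that grouping its elements by the power of $B$ they carry gives the claimed decomposition. First I would recall that for a sequence of points $\alpha_1,\alpha_2,\dots$ in $\mathbb{D}$ the functions
\begin{equation*}
\phi_k(z)=(1-|\alpha_k|^2)^{\frac{1}{2}}(1-\overline{\alpha_k}z)^{-1}\prod_{i=1}^{k-1}\frac{z-\alpha_i}{1-\overline{\alpha_i}z}
\end{equation*}
form an orthonormal set in $H^2$; this is a standard computation using the reproducing kernel $k_\lambda(z)=(1-\overline{\lambda}z)^{-1}$ and the fact that each Blaschke factor is unimodular on $\mathbb{T}$. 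Here I would periodize: I take the infinite sequence of zeros to be $\alpha_1,\dots,\alpha_n,\alpha_1,\dots,\alpha_n,\alpha_1,\dots$, i.e. the zeros of $B$ repeated. Then $\prod_{i=1}^{mn+j}\frac{z-\alpha_i}{1-\overline{\alpha_i}z}=B^m B_j$, so the resulting orthonormal system is exactly $\{e_{jm}: 0\le j\le n-1,\ m\ge 0\}$ up to the indexing $k=mn+j+1$. So orthonormality is immediate from the Takenaka--Malmquist computation.

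The substantive point is \emph{completeness} of the system in $H^2$. The classical criterion says the Takenaka--Malmquist system is complete in $H^2$ iff $\sum_k (1-|\alpha_k|)=\infty$, which holds trivially here since each of the $n$ zeros is repeated infinitely often (the sum is $\infty\cdot\sum_{j=1}^n(1-|\alpha_j|)$). I would therefore either cite this classical fact (Walsh, or the references \cite{sinth}) or, to keep the argument self-contained, argue directly: if $f\in H^2$ is orthogonal to every $e_{jm}$, then in particular $f\perp \phi_1,\dots,\phi_n$ forces (via partial-fraction / reproducing-kernel manipulations, and using $\alpha_1=0$ so $\phi_1$ is a constant) that $f$ vanishes appropriately and $f=Bg$ for some $g\in H^2$ with $\|g\|=\|f\|$; iterating, $f=B^m g_m$ for all $m$, whence $|f|=|g_m|$ a.e.\ on $\mathbb{T}$ and $g_m\to 0$ weakly, giving $\|f\|=\|g_m\|\to 0$. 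The cleanest route, though, is to invoke the known completeness result, since the lemma is attributed to \cite[Theorem 3.3]{sinth}.

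Finally, for the "consequently" clause: group the basis by residue $j$ of the index mod $n$. For fixed $j$, the closed span of $\{e_{jm}:m\ge 0\}$ equals $e_{j0}\cdot\overline{\mathrm{span}}\{B^m:m\ge0\}=e_{j0}H^2(B)$, because $e_{jm}=e_{j0}B^m$ and multiplication by the inner function $e_{j0}/\|e_{j0}\|$... — more precisely $e_{jm}/e_{j0}=B^m$, and since $e_{j0}=(1-|\alpha_{j+1}|^2)^{1/2}(1-\overline{\alpha_{j+1}}z)^{-1}B_j$ is a bounded function that is bounded below on $\mathbb{T}$ in modulus (its modulus is $|1-\overline{\alpha_{j+1}}z|^{-1}(1-|\alpha_{j+1}|^2)^{1/2}$, which stays between positive constants), multiplication by $e_{j0}$ is a bounded invertible map on $L^2$, hence carries the closed span of $\{B^m\}$ onto the closed span of $\{e_{jm}\}$. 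Orthogonality of the $n$ summands $e_{j0}H^2(B)$ across distinct $j$ is already contained in the orthonormality of the full system, and their span is dense by completeness; this gives $H^2=\bigoplus_{j=0}^{n-1}e_{j0}H^2(B)$. The main obstacle is the completeness step — everything else is bookkeeping with the explicit formulas — and I would handle it by citing the Takenaka--Malmquist/Walsh completeness criterion, or by the iterated factorization argument sketched above.
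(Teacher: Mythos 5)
The paper does not prove this statement at all: it is quoted verbatim as Theorem 3.3 of the Singh--Thukral reference \cite{sinth}, so there is no in-paper argument to compare against. Your proposal supplies a genuine proof, and it is essentially correct. Recognizing $\{e_{jm}\}$ as the Takenaka--Malmquist system attached to the periodized zero sequence $\alpha_1,\dots,\alpha_n,\alpha_1,\dots,\alpha_n,\dots$ (with $k=mn+j+1$, so that $\prod_{i=1}^{k-1}$ of the elementary factors is $B^mB_j$) is exactly the right observation: orthonormality is then the standard reproducing-kernel computation, completeness follows from the Walsh criterion $\sum_k(1-|\alpha_k|)=\infty$, which holds trivially because each $\alpha_j\in\mathbb{D}$ is repeated infinitely often, and the decomposition $H^2=\bigoplus_{j=0}^{n-1}e_{j0}H^2(B)$ is, as you say, bookkeeping: multiplication by $e_{j0}$ is bounded above and below in modulus on $\mathbb{T}$ (since $|B_j|=1$ there), so it carries the closed span of $\{B^m\}$ onto the closed span of $\{e_{jm}:m\geq 0\}$, and orthogonality across distinct $j$ plus density are already contained in the orthonormal-basis assertion.

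One local repair is needed in your ``self-contained'' completeness sketch: the step ``$g_m\to 0$ weakly, giving $\|f\|=\|g_m\|\to 0$'' is not a valid deduction, since weak convergence does not control norms. The correct conclusion from $f=B^mg_m$ with $\|g_m\|=\|f\|$ is simply that $f\in\bigcap_{m\geq 1}B^mH^2$, and since $\alpha_1=0$ gives $B(0)=0$, we have $B^mH^2\subset z^mH^2$, whence the intersection is $\{0\}$ and $f=0$. (You also need the observation that $\mathrm{span}\{\phi_1,\dots,\phi_n\}=H^2\ominus BH^2$, which follows from $\phi_k\perp BH^2$ and linear independence; this is standard but worth stating.) Since you offer the citation of the classical completeness criterion as your primary route, the proof as a whole stands; just replace the weak-convergence line if you keep the direct argument.
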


\noindent Let $\left( \varphi _{1},\ldots ,\varphi _{r}\right) $ be an $r$
tuple of $H^{\infty }$ functions $\left( r\leq n\right) $. Suppose each $%
\varphi _{j}$ has the representation 
\begin{equation*}
\varphi _{j}=\sum\limits_{i=0}^{n-1}e_{i0}\varphi _{ij}.
\end{equation*}%
The matrix $A=\left( \varphi _{ij}\right) _{n\times r}$ is called the $B-$
matrix of $\left( \varphi _{1},\ldots ,\varphi _{r}\right) $. The matrix $A$
is called $B$-inner if $A^{\ast }A=I$. Suppose an $H^{\infty }$ function $%
\psi $ has the representation 
\begin{equation*}
\psi =\sum_{j=0}^{n-1}e_{j0}\psi _{j},
\end{equation*}%
for some $\psi _{0},\ldots ,\psi _{n-1}\in H^{2}(B)$, then $\psi $ is called 
$B$-inner if 
\begin{equation*}
\sum_{j=0}^{n-1}|\psi _{j}|^{2}=1~a.e.
\end{equation*}

\noindent It has been proved in \cite{sinth} that $\psi $ is $B$-inner if
and only if $\left\{ B^{m}\psi:m=0,1,...\right\} $ is an orthonormal set in $%
H^2$.

\begin{theorem}
(\cite[Theorem 4.1]{sinth}) Let $M$ be a $B$-invariant subspace of $H^{2}$.
Then there is an $r\leq n$ such that 
\begin{equation*}
M=\varphi _{1}H^{2}(B)\oplus \varphi _{2}H^{2}(B)\oplus \cdots \oplus
\varphi _{r}H^{2}(B)
\end{equation*}%
for some $B$-inner functions $\varphi _{1},\ldots ,\varphi _{r}$, and the $B$%
-matrix of $\left( \varphi _{1},\ldots ,\varphi _{r}\right) $ is $B$-inner.
Further, this representation is unique in the sense that if 
\begin{equation*}
M=\psi _{1}H^{2}(B)\oplus \psi _{2}H^{2}(B)\oplus \cdots \oplus \psi
_{s}H^{2}(B)
\end{equation*}%
then $r=s$, and $\varphi _{i}=\sum\limits_{j=1}^{r}\alpha _{ij}\psi _{j}$
for scalars $\alpha _{ij}$ such that the matrix $(\alpha _{ij})$ is unitary.
\end{theorem}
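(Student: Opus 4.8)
The plan is to reduce the problem to the classical Beurling--Lax--Halmos theorem by exploiting the decomposition $H^{2}=\bigoplus_{j=0}^{n-1}e_{j0}H^{2}(B)$ from Theorem~\ref{thukral}. First I would observe that $H^{2}(B)$ is unitarily equivalent, via $B^{m}\mapsto z^{m}$, to the scalar Hardy space $H^{2}$ of the disc, and that under this identification multiplication by $B$ on $H^{2}(B)$ corresponds to the unilateral shift. Assembling the $n$ copies $e_{j0}H^{2}(B)$, one obtains a unitary $U\colon H^{2}\to H^{2}_{\mathbb{C}^{n}}$ (vector-valued Hardy space) intertwining $T_{B}$ with the shift $S$ of multiplicity $n$: indeed $f=\sum_{j}e_{j0}f_{j}$ with $f_{j}\in H^{2}(B)$ gets sent to the $\mathbb{C}^{n}$-valued function whose $j$-th coordinate is the ``$B$-dilated'' version of $f_{j}$. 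A $B$-invariant subspace $M\subset H^{2}$ is then carried to an $S$-invariant (i.e.\ shift-invariant) subspace of $H^{2}_{\mathbb{C}^{n}}$, so by Beurling--Lax--Halmos it equals $\Theta H^{2}_{\mathbb{C}^{k}}$ for some inner operator-valued function $\Theta\colon\mathbb{C}^{k}\to\mathbb{C}^{n}$ with $k\le n$.

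Next I would translate this representation back to $H^{2}$. The $k$ columns of $\Theta$, pulled back through $U^{-1}$, yield functions $\varphi_{1},\dots,\varphi_{k}\in H^{2}$; set $r=k$. Writing $\varphi_{l}=\sum_{j=0}^{n-1}e_{j0}\varphi_{jl}$ recovers the $B$-matrix $A=(\varphi_{jl})_{n\times r}$, and the fact that $\Theta$ is inner (an isometry a.e.\ on $\mathbb{T}$) is exactly the statement $A^{*}A=I$, i.e.\ $A$ is $B$-inner. That each individual $\varphi_{l}$ is $B$-inner follows because the $l$-th column of an inner matrix has unit norm a.e., which is precisely $\sum_{j}|\varphi_{jl}|^{2}=1$ a.e.; equivalently, as recorded in the excerpt, $\{B^{m}\varphi_{l}:m\ge0\}$ is orthonormal. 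The orthogonality of the sum $\varphi_{1}H^{2}(B)\oplus\cdots\oplus\varphi_{r}H^{2}(B)$ corresponds to the mutual orthogonality of distinct columns of $\Theta$ together with the shift-invariance, so the range of $\Theta$ decomposes as the direct sum of the cyclic pieces generated by its columns, and applying $U^{-1}$ gives the claimed orthogonal decomposition of $M$.

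For uniqueness I would invoke the uniqueness clause of Beurling--Lax--Halmos: two inner functions $\Theta_{1}\colon\mathbb{C}^{r}\to\mathbb{C}^{n}$ and $\Theta_{2}\colon\mathbb{C}^{s}\to\mathbb{C}^{n}$ have the same range iff $r=s$ and $\Theta_{2}=\Theta_{1}W$ for a constant unitary $W$ on $\mathbb{C}^{r}$. Transporting this through $U$, the equality $r=s$ is immediate, and $\Theta_{1}=\Theta_{2}W^{*}$ read columnwise says $\varphi_{i}=\sum_{j=1}^{r}\alpha_{ij}\psi_{j}$ with $(\alpha_{ij})$ unitary, which is the assertion. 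The main obstacle I anticipate is setting up the unitary $U$ carefully enough that it genuinely intertwines $T_{B}$ with the multiplicity-$n$ shift \emph{and} correctly matches the notion of ``$B$-inner matrix'' ($A^{*}A=I$ with entries in $H^{2}(B)$) with ``operator-valued inner function''; in particular one must check that the pointwise identity $A^{*}A=I$ a.e.\ on $\mathbb{T}$ is the right nonboundary-valued surrogate, and that the direct-sum (rather than merely closed-span) structure is preserved. Once this dictionary is in place, every remaining step is a routine transcription of the classical theorem.
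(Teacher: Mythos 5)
This statement appears in the paper only as an imported result (\cite[Theorem 4.1]{sinth}); the paper itself gives no proof of it, so there is no in-paper argument to compare yours against, and I will compare against the cited source instead. Your proposal is a correct and essentially standard alternative route: the orthonormal basis $\{e_{jm}\}$ of Theorem 1 does furnish a unitary $U\colon H^{2}\to H^{2}_{\mathbb{C}^{n}}$ sending $\sum_{j}e_{j0}g_{j}(B)$ to $(g_{0},\dots ,g_{n-1})$ and intertwining $T_{B}$ with the multiplicity-$n$ shift, after which the vector-valued Beurling--Lax--Halmos theorem delivers everything: the bound $r\le n$ (since $\dim (M\ominus BM)\le n$), the identification of ``$\Theta$ inner'' with ``the $B$-matrix $A$ satisfies $A^{\ast }A=I$'' via the change of variable $w=B(z)$ on $\mathbb{T}$, the orthogonality of the summands $\varphi _{l}H^{2}(B)$ from $\Theta ^{\ast }\Theta =I$ a.e., and the uniqueness up to a constant unitary right factor. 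The source proves the result directly in the scalar setting by Halmos's wandering-subspace argument applied to the isometry $T_{B}$ (take an orthonormal basis of $M\ominus BM$, observe each basis element is $B$-inner because $\{B^{m}\varphi _{i}\}$ is orthonormal, and sum the cyclic pieces), which avoids constructing the vector-valued dictionary; your version buys an explicit link to the classical multiplicity-$n$ Lax--Halmos theorem that the introduction alludes to. Two points you should make explicit rather than leave implicit: first, that $U$ really is unitary (this is exactly the content of Theorem 1, since $e_{jm}\mapsto z^{m}e_{j}$ carries one orthonormal basis to another); second, that in the uniqueness clause the alternative generators $\psi _{j}$ must themselves be $B$-inner with $B$-inner matrix (equivalently, the corresponding $\Theta _{2}$ must be inner) before the Beurling--Lax--Halmos uniqueness can be invoked --- the statement as quoted leaves that hypothesis tacit.
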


We shall also make use of the following results to establish certain key
facts that are central to the proof of the main results.

\begin{lemma}
(\cite[Proposition 3]{lance}) Let $1\leq p\leq \infty $ and let $\varphi
_{1},\ldots ,\varphi _{k}$ $\left( k\leq n\right) $ be $B$-inner functions.
Then for any $f\in H^{\infty }$ such that $f\left( z\right)
=\sum\limits_{i=1}^{k}\varphi _{i}\left( z\right) f_{i}\left( B\left(
z\right) \right) $, there exist constants $C_{i,p}$, $i=1,\ldots ,k$ such
that $\left\Vert f_{i}\right\Vert _{p}\leq C_{i,p}\left\Vert f\right\Vert
_{p}$.
\end{lemma}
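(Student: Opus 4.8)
The plan is to reduce the estimate to the special case $k=n$ and then exploit the $B$-inner structure of the coefficient matrix. First I would augment the family $\varphi_1,\dots,\varphi_k$ to a maximal family: since the $B$-matrix $A=(\varphi_{ij})_{n\times k}$ of $(\varphi_1,\dots,\varphi_k)$ is $B$-inner, its columns form an orthonormal set in the scalar $\ell^2$ sense pointwise a.e., so by Theorem \ref{thukral} (and the cited characterization of $B$-inner tuples) one can complete it to an $n$-tuple $(\varphi_1,\dots,\varphi_n)$ whose $B$-matrix $\widetilde A$ is an $n\times n$ matrix with $\widetilde A^{*}\widetilde A=I$, hence unitary a.e. on $\mathbb{T}$. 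Writing $f=\sum_{i=1}^{k}\varphi_i\,f_i(B)$, this is the same as $f=\sum_{i=1}^{n}\varphi_i\,f_i(B)$ with $f_{k+1}=\cdots=f_n=0$, so it suffices to bound each $\|f_i\|_p$, $1\le i\le n$, by $\|f\|_p$.

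Next I would invert the system pointwise. Expanding $f=\sum_{j=0}^{n-1}e_{j0}g_j$ with $g_j\in H^2(B)$, the relation $f=\sum_i\varphi_i f_i(B)$ becomes, comparing the $e_{j0}$-components, $g_j=\sum_{i=1}^{n}\varphi_{ij}\,f_i(B)$; in matrix form, if $G=(g_0,\dots,g_{n-1})^{t}$ and $F=(f_1(B),\dots,f_n(B))^{t}$, then $G=\widetilde A F$, where the entries of $\widetilde A$ lie in $H^2(B)\subset H^2$. Since $\widetilde A$ is unitary a.e., $F=\widetilde A^{*}G$, i.e. $f_i(B)=\sum_{j=0}^{n-1}\overline{\varphi_{ij}}\,g_j$ a.e. on $\mathbb{T}$. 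Now I would recover $f_i$ from $f_i(B)$: composition with $B$ is an isometry on every $L^p(\mathbb{T})$ because $B$ maps normalized Lebesgue measure to itself (being an $n$-to-$1$ inner map), so $\|f_i\|_p=\|f_i(B)\|_p$, and it remains to estimate $\|f_i(B)\|_p=\big\|\sum_{j=0}^{n-1}\overline{\varphi_{ij}}\,g_j\big\|_p$ in terms of $\|f\|_p=\big\|\sum_{j=0}^{n-1}e_{j0}g_j\big\|_p$.

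For this last estimate I would pass through $L^2$-type orthogonality pointwise. For a.e. fixed $\zeta\in\mathbb{T}$ the vector $(g_0(\zeta),\dots,g_{n-1}(\zeta))$ has Euclidean norm comparable to $|f(\zeta)|$: indeed $|f(\zeta)|^2=\big|\sum_j e_{j0}(\zeta)g_j(\zeta)\big|^2$ and, since $\{e_{j0}\}$ arise from the orthonormal system of Theorem \ref{thukral}, one checks that the functions $e_{j0}$ are bounded above and below on $\mathbb{T}$ (each $e_{j0}$ is a unit vector times $(1-\overline{\alpha_{j+1}}z)^{-1}B_j$, whose modulus is bounded away from $0$ and $\infty$ on $\mathbb{T}$), and that at each point the contributions from distinct $j$ sit in ``different frequency bands'' with respect to $B$, so there are constants $c,C>0$ with $c\sum_j|g_j(\zeta)|^2\le|f(\zeta)|^2\le C\sum_j|g_j(\zeta)|^2$ — this is exactly the content one extracts from the decomposition $H^2=\sum\oplus e_{j0}H^2(B)$ together with the boundedness of the $e_{j0}$. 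Granting this, $|f_i(B)(\zeta)|=\big|\sum_j\overline{\varphi_{ij}(\zeta)}g_j(\zeta)\big|\le\big(\sum_j|\varphi_{ij}(\zeta)|^2\big)^{1/2}\big(\sum_j|g_j(\zeta)|^2\big)^{1/2}\le c^{-1/2}|f(\zeta)|$, because the columns of the $B$-inner matrix $\widetilde A$ have Euclidean norm $1$ a.e., forcing $\sum_j|\varphi_{ij}(\zeta)|^2\le 1$ for each $i$. Integrating the $p$-th power gives $\|f_i\|_p=\|f_i(B)\|_p\le c^{-1/2}\|f\|_p$, which is the desired bound with $C_{i,p}=c^{-1/2}$ independent of $i$ (one may of course keep track of the individual $c$ for each $j$ if one wants sharper constants).

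The main obstacle I anticipate is the two-sided pointwise comparison $\sum_j|g_j(\zeta)|^2\asymp|f(\zeta)|^2$: the orthogonal decomposition of Theorem \ref{thukral} is a statement in $H^2$, and turning it into an a.e. pointwise norm equivalence on $\mathbb{T}$ requires knowing that the $e_{j0}$ are genuine bounded multipliers with bounded ``inverses'' on the relevant pieces, and that the cross terms $\int e_{j0}\overline{e_{k0}}\,h\,dm$ vanish for $h\in H^2(B)\overline{H^2(B)}$ in a way that localizes. If that local equivalence proves delicate, the fallback is to argue $p=\infty$ and $p=2$ directly (for $p=\infty$ the pointwise inequality $|f_i(B)(\zeta)|\le c^{-1/2}|f(\zeta)|$ still suffices, and for $p=2$ one uses honest orthogonality), and then either interpolate for $1\le p\le\infty$ or invoke the fact that $f\in H^\infty$ throughout to reduce every $L^p$ statement to the pointwise bound on $\mathbb{T}$, which is what the argument above actually delivers.
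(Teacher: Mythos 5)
First, a remark on the comparison: the paper does not prove this lemma at all---it is imported verbatim as \cite[Proposition 3]{lance}---so your argument has to stand on its own. Your opening reductions are fine (the completion of $A$ to a square unitary-valued matrix is an unnecessary detour, since $A^{\ast}A=I_{k}$ already yields $f_{i}(B)=\sum_{j}\overline{\varphi_{ji}}\,g_{j}$ a.e.), and the identity $\Vert f_{i}\Vert _{p}=\Vert f_{i}(B)\Vert _{p}$ is correct because $\alpha _{1}=0$ makes $B$ measure-preserving on $\mathbb{T}$. The genuine gap is exactly the step you flagged as the main obstacle: the pointwise lower bound $c\sum_{j}|g_{j}(\zeta )|^{2}\leq |f(\zeta )|^{2}$ a.e.\ is false. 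Take $B(z)=z^{2}$, so that $e_{00}=1$ and $e_{10}=z$, and let $f(z)=1-z$; then $g_{0}\equiv 1$, $g_{1}\equiv -1$ (and $\varphi _{1}=1$, $\varphi _{2}=z$, $f_{1}\equiv 1$, $f_{2}\equiv -1$, so the hypotheses of the lemma are satisfied), yet at $\zeta =1$ we have $f(\zeta )=0$ while $\sum_{j}|g_{j}(\zeta )|^{2}=2$. The components $g_{j}$ are simply not controlled by $|f|$ at a single point, and the ``different frequency bands'' intuition does not localize. This also sinks both fallbacks: the $p=\infty $ fallback invokes the same false pointwise inequality, and interpolating from $p=2$ and $p=\infty $ would in any case not reach $1\leq p<2$.

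The estimate that is actually true, and that rescues your scheme, is fiber-wise rather than pointwise (this is essentially the route in \cite{lance}). Since $B$ is an $n$-to-$1$ inner map of $\mathbb{T}$ preserving normalized Lebesgue measure, for a.e.\ $\lambda \in \mathbb{T}$ the fiber $B^{-1}(\lambda )=\{w_{1},\ldots ,w_{n}\}$ consists of $n$ points on which each $g_{j}$, being a function of $B$, is constant; hence the values $f(w_{s})=\sum_{j}e_{j0}(w_{s})g_{j}(\lambda )$ form an $n\times n$ linear system for the $g_{j}(\lambda )$ whose coefficient matrix $\bigl(e_{j0}(w_{s})\bigr)$ has a uniformly bounded inverse on $\mathbb{T}$ (for $B=z^{2}$ it is $\bigl(
\begin{smallmatrix}
1 & w \\
1 & -w
\end{smallmatrix}
\bigr)$ with determinant $-2w$). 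This gives $|g_{j}(\lambda )|\leq C\sum_{s}|f(w_{s})|$, and integrating the $p$-th power over $\lambda $ via the measure-preserving change of variables yields $\Vert g_{j}\Vert _{p}\leq C\Vert f\Vert _{p}$ for all $1\leq p\leq \infty $. With that in hand your own Cauchy--Schwarz step $|f_{i}(B)|\leq \bigl(\sum_{j}|\varphi _{ji}|^{2}\bigr)^{1/2}\bigl(\sum_{j}|g_{j}|^{2}\bigr)^{1/2}=\bigl(\sum_{j}|g_{j}|^{2}\bigr)^{1/2}$ finishes the proof, since $\bigl\Vert \bigl(\sum_{j}|g_{j}|^{2}\bigr)^{1/2}\bigr\Vert _{p}\leq \sum_{j}\Vert g_{j}\Vert _{p}$.
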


\begin{lemma}
For $1\leq p<2$, we can write 
\begin{equation*}
H^{p}=e_{00}H^{p}\left( B\right) \oplus e_{10}H^{p}\left( B\right) \oplus
\cdots \oplus e_{n-1,0}H^{p}\left( B\right) ,
\end{equation*}%
where each $e_{j0}$ is as in Theorem $1$ for each $j$.
\end{lemma}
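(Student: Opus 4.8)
The plan is to bootstrap from the $p=2$ orthogonal decomposition in Theorem~\ref{thukral} down to $1\le p<2$ by density and by controlling the coordinate projections. Fix $1\le p<2$. The functions $e_{j0}$, $0\le j\le n-1$, are bounded (indeed $e_{j0}=(1-|\alpha_{j+1}|^2)^{1/2}(1-\overline{\alpha_{j+1}}z)^{-1}B_j$ is a constant times a finite Blaschke factor quotient, hence in $H^\infty$) and they are $B$-inner in the sense defined above. So each summand $e_{j0}H^p(B)$ sits inside $H^p$. The first thing I would check is that the sum is algebraically direct: if $\sum_{j=0}^{n-1} e_{j0}g_j=0$ with $g_j\in H^p(B)$, apply the $p=2$ case on a dense set and then use Lemma~4 (the Lance-type estimate $\|f_i\|_p\le C_{i,p}\|f\|_p$, valid for $1\le p\le\infty$) to conclude each $g_j=0$; this simultaneously shows the coordinate maps $f\mapsto g_j$ are well-defined and bounded on the algebraic direct sum, so it is closed in $H^p$.

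The core step is to show that the algebraic sum $e_{00}H^p(B)+\cdots+e_{n-1,0}H^p(B)$ is all of $H^p$. For this I would argue by density: analytic polynomials are dense in $H^p$, so it suffices to express each monomial $z^k$ — or better, each $z^k B^m$ — in the desired form, and then pass to the limit using the boundedness of the coordinate projections established above to guarantee the limit stays in the (closed) direct sum with the right coordinates. Actually the cleanest route is: by Theorem~\ref{thukral}, $H^2=\bigoplus_{j=0}^{n-1} e_{j0}H^2(B)$, and in particular every analytic polynomial $q$ can be written (uniquely) as $q=\sum_{j=0}^{n-1} e_{j0}g_j$ with $g_j\in H^2(B)\subset H^p(B)$; one checks the $g_j$ are themselves polynomials in $B$ (finite sums), hence certainly in $H^p(B)$. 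Given $f\in H^p$ pick polynomials $q_\ell\to f$ in $H^p$; write $q_\ell=\sum_j e_{j0}g_j^{(\ell)}$; by Lemma~4 applied to $H^\infty$ — wait, Lemma~4 is stated for $f\in H^\infty$, so instead I would invoke the $1\le p<\infty$ version of the coordinate estimate obtained in the previous step (which follows from Lemma~4 by the usual $H^\infty$-density / closed-graph type argument, or is already available as Proposition~3 of \cite{lance} in the stated range), to get $\|g_j^{(\ell)}-g_j^{(m)}\|_p\le C_{j,p}\|q_\ell-q_m\|_p$. Hence each $(g_j^{(\ell)})_\ell$ is Cauchy in $H^p(B)$, converges to some $g_j\in H^p(B)$, and $f=\sum_j e_{j0}g_j$. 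This gives surjectivity.

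Finally I would record uniqueness of the representation (directness of the sum), which we already have from the injectivity argument, and note that the sum is orthogonal-in-spirit even though for $p\ne2$ there is no inner product: the decomposition is \emph{canonical} because the coordinate projections are bounded and agree with the $H^2$ ones on the dense set of polynomials. The main obstacle I anticipate is the interface with Lemma~4: as stated it requires $f\in H^\infty$, whereas I need the coordinate bound for all $f\in H^p$. Bridging that gap — either by a density/closed-graph argument promoting the $H^\infty$ estimate to an $H^p$ estimate, or by citing the full strength of \cite[Proposition~3]{lance} — is the delicate point; once that estimate is in hand for all of $H^p$, surjectivity and directness both fall out routinely from density of polynomials together with Theorem~\ref{thukral}. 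One should also be mildly careful that $H^p(B)$ is genuinely closed in $H^p$ (it is, being the closed span of $\{B^m\}$ by definition) so that the limits $g_j$ land back inside it.
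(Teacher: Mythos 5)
Your proposal is correct and follows essentially the same route as the paper: approximate $f\in H^{p}$ by a dense class of $H^{\infty}$ functions (the paper uses $H^{\infty}$ itself, you use polynomials), decompose each approximant via Theorem 1, and apply the Lance--Stessin coordinate estimate (the paper's Lemma 1, which you call Lemma 4) to \emph{differences} of approximants --- these are themselves in $H^{\infty}$, so your worry about the hypothesis of that lemma is moot --- to conclude that each coordinate sequence is Cauchy in $H^{p}(B)$ and then pass to the limit. The only point worth making explicit is that the constant $C_{j,p}$ in that estimate is uniform over $f\in H^{\infty}$ (it depends only on $p$ and the $B$-inner functions, not on $f$), which is exactly what makes the Cauchy argument work; granting that, your argument, including the added remarks on directness, is sound.
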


\begin{proof}
It is trivial to note that 
\begin{equation*}
e_{00}H^{p}\left( B\right) \oplus e_{10}H^{p}\left( B\right) \oplus \cdots
\oplus e_{n-1,0}H^{p}\left( B\right) \subset H^{p}.
\end{equation*}

\noindent To establish the opposite inclusion take an arbitrary element $%
f\in H^{p}$. Since $H^{\infty }$ is dense in $H^{p}$, there exists a
sequence $\{f_{k}\}$ of $H^{\infty }$ functions that converges to $f$ in the
norm of $H^{p}$. In view of Theorem 1 we can write 
\begin{equation}
f_{k}=e_{00}f_{k}^{(1)}+e_{10}f_{k}^{(2)}+\cdots +e_{n-1,0}f_{k}^{(n)},
\end{equation}%
where $f_{k}^{(j)}\in H^{2}\left( B\right) $ for all $j=1,...,n$. By Lemma
1, we have for all $j=1,\ldots n$, the estimate 
\begin{equation}
\left\Vert f_{k}^{(j)}\right\Vert _{p}\leq D_{kj}\left\Vert f_{k}\right\Vert
_{p}
\end{equation}%
for some constants $D_{kj}$. Equation $(2.2)$ implies that $\left\{
f_{k}^{(j)}\right\} $ is a Cauchy sequence for all $j=1,\ldots ,n$, and
hence $f_{k}^{(j)}\rightarrow f^{(j)}$ in $H^{p}\left( B\right) $. Therefore 
$f_{k}\rightarrow $ $e_{00}f^{(1)}+e_{10}f^{(2)}+\cdots +e_{n-1,0}f^{(n)}$
as $k\rightarrow \infty $ in $H^{p}$. Hence $f=e_{00}f^{(1)}+e_{10}f^{(2)}+%
\cdots +e_{n-1,0}f^{(n)}$. This completes the proof of the assertion.
\end{proof}

\begin{lemma}
(\cite[Lemma 4.1]{gam}) Suppose $\left\{ f_{n}\right\} _{n=1}^{\infty }$ is
a sequence of $H^{p}$ functions, $p>2$, which converges to an $H^{p}$
function $f$ in the $H^{2}$ norm. Then there exists a sequence $\left\{
g_{n}\right\} _{n=1}^{\infty }$ of $H^{\infty }$ functions such that $%
g_{n}f_{n}\rightarrow f$ in the $H^{p}$ norm (weak-star convergence when $%
p=\infty $). Further the sequence $\left\{ g_{n}\right\} _{n=1}^{\infty }$
is uniformly bounded, and converges to the constant function $1~a.e.$
\end{lemma}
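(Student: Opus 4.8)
The plan is to realize each $g_{n}$ as a bounded \emph{outer} function whose modulus is a truncation of one fixed dominating function, the truncation level being pushed to infinity slowly enough that the error committed on the set where $f_{n}$ is large is negligible in the $H^{p}$ norm, while still $g_{n}\to 1$. First I would pass to a subsequence (not relabelled) with $\sum_{n}\|f_{n}-f\|_{2}<\infty$ and set $h=1+|f|+\sum_{n\geq 1}|f_{n}-f|$ on $\mathbb{T}$. Since $p>2$ we have $H^{p}\subset L^{2}$, so Minkowski's inequality gives $h\in L^{2}$; in particular $1\leq h<\infty$ a.e., $|f|\leq h$ and $|f_{n}|\leq h$ for all $n$, and $\log^{+}h\leq h\in L^{1}$.

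For $2<p<\infty$, put $c_{n}=\|f_{n}-f\|_{2}^{-1/(p-2)}$ when $\|f_{n}-f\|_{2}\neq 0$ (and take $g_{n}\equiv 1$ otherwise), so $c_{n}\to\infty$, and let $g_{n}\in H^{\infty}$ be the outer function with $g_{n}(0)>0$ and boundary modulus $|g_{n}|=\min(1,c_{n}/h)$. Then $\|g_{n}\|_{\infty}\leq 1$, and since $|g_{n}|h=\min(h,c_{n})\leq c_{n}$ we get $g_{n}f_{n}\in H^{\infty}\subset H^{p}$ and $g_{n}(f_{n}-f)\in H^{\infty}$ with $\|g_{n}(f_{n}-f)\|_{\infty}\leq 2c_{n}$. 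Because $h<\infty$ a.e. and $c_{n}\to\infty$, $\log|g_{n}|=-(\log h-\log c_{n})^{+}\to 0$ a.e. with $|\log|g_{n}||\leq\log^{+}h\in L^{1}$; dominated convergence then yields $g_{n}(0)=\exp(\int\log|g_{n}|\,dm)\to 1$ and $\|g_{n}\|_{2}^{2}=\int|g_{n}|^{2}\,dm\to 1$, whence $\|1-g_{n}\|_{2}^{2}=1-2\operatorname{Re}g_{n}(0)+\|g_{n}\|_{2}^{2}\to 0$. Thus $g_{n}\to 1$ in $L^{2}(\mathbb{T})$, and a further subsequence converges to $1$ a.e. For the approximation write $g_{n}f_{n}-f=g_{n}(f_{n}-f)+(g_{n}-1)f$. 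The first term obeys $\|g_{n}(f_{n}-f)\|_{p}^{p}\leq\|g_{n}(f_{n}-f)\|_{\infty}^{p-2}\int|f_{n}-f|^{2}\,dm\leq(2c_{n})^{p-2}\|f_{n}-f\|_{2}^{2}=2^{p-2}\|f_{n}-f\|_{2}\to 0$. For the second, $\|(g_{n}-1)f\|_{p}^{p}\leq 2^{p-2}\int|g_{n}-1|^{2}|f|^{p}\,dm$, and splitting $|f|^{p}$ at a large level $R$ and using that $|f|^{p}\in L^{1}$ together with $\|g_{n}-1\|_{2}\to 0$ makes this tend to $0$. Hence $g_{n}f_{n}\to f$ in $H^{p}$.

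For $p=\infty$ the truncation must be kept bounded, so instead take $M=1+\|f\|_{\infty}$ and let $g_{n}$ be the outer function with $g_{n}(0)>0$ and $|g_{n}|=\min(1,M/|f_{n}|)$; then $\|g_{n}\|_{\infty}\leq 1$ and $\|g_{n}f_{n}\|_{\infty}\leq M$. Along the subsequence on which $f_{n}\to f$ a.e. one has $\min(1,M/|f_{n}|)\to 1$ a.e. (since $|f|\leq\|f\|_{\infty}<M$), so $\log|g_{n}|\to 0$ a.e. with $|\log|g_{n}||\leq\log^{+}h\in L^{1}$, giving again $g_{n}\to 1$ in $L^{2}$ and, along a subsequence, a.e. Then $g_{n}f_{n}-f=g_{n}(f_{n}-f)+(g_{n}-1)f\to 0$ in $H^{2}$, and since $\{g_{n}f_{n}\}$ is uniformly bounded in $L^{\infty}$ this upgrades to weak-star convergence by approximating an arbitrary $L^{1}$ test function by $L^{2}$ functions.

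The main obstacle is the built-in tension between the requirements: forcing $g_{n}f_{n}$ to stay bounded drives $|g_{n}|$ down exactly where $|f_{n}|$ is large, while $g_{n}\to 1$ in a norm strong enough to kill the cross terms forbids $|g_{n}|$ from being too small on any fixed set. The resolution is to let the cut-off level $c_{n}$ grow, but only as slowly as the rate of the $H^{2}$-convergence permits, and then to estimate the error by interpolating between $L^{\infty}$ and $L^{2}$ — which is available precisely because $2<p<\infty$ — with the endpoint $p=\infty$ handled separately by the fixed-level construction above. A secondary technical point is translating the a.e.\ convergence of the moduli $|g_{n}|$ into convergence of the outer functions $g_{n}$ themselves, which is where the identity $\|1-g_{n}\|_{2}^{2}=1-2\operatorname{Re}g_{n}(0)+\|g_{n}\|_{2}^{2}$ is used.
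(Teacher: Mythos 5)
The paper offers no proof of this lemma at all: it is imported verbatim from Gamelin (the reference \cite{gam}) and used as a black box, so there is nothing internal to compare your argument against. On its own terms, your construction is the standard and correct one for results of this type: an outer function $g_{n}$ whose boundary modulus truncates an $L^{2}$ majorant at a level $c_{n}$ calibrated against $\left\Vert f_{n}-f\right\Vert _{2}$, the interpolation bound $\left\Vert g_{n}(f_{n}-f)\right\Vert _{p}^{p}\leq \left\Vert g_{n}(f_{n}-f)\right\Vert _{\infty }^{p-2}\left\Vert f_{n}-f\right\Vert _{2}^{2}$, and the identity $\left\Vert 1-g_{n}\right\Vert _{2}^{2}=1-2\,\mathrm{Re}\,g_{n}(0)+\left\Vert g_{n}\right\Vert _{2}^{2}$ to pass from convergence of the moduli to convergence of the outer functions themselves. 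The individual estimates all check out, including the separate fixed-level truncation for $p=\infty $ and the upgrade from $L^{2}$ convergence plus a uniform $L^{\infty }$ bound to weak-star convergence.

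The one genuine discrepancy with the statement is that, as written, you obtain the conclusion only along a sub-subsequence of the given sequence: you first discard terms to make $\sum_{n}\left\Vert f_{n}-f\right\Vert _{2}$ summable (needed to define the majorant $h$), and you later extract again to convert $g_{n}\rightarrow 1$ in $L^{2}$ into a.e.\ convergence. The lemma asserts the existence of $g_{n}$ for the given indexing, and the paper does use the uniform boundedness and a.e.\ convergence to $1$ of the whole constructed sequence downstream. The second extraction is easily avoided: since $\log \left\vert g_{n}\right\vert \leq 0$ one has $\left\Vert 1-g_{n}\right\Vert _{2}^{2}\leq 2\left( 1-g_{n}(0)\right) \leq 2\int_{\{h>c_{n}\}}\log (h/c_{n})\,dm\leq 2\left\Vert h\right\Vert _{2}^{2}c_{n}^{-2}$, which is summable provided the initial subsequence is chosen so that $\sum_{n}\left\Vert f_{n}-f\right\Vert _{2}^{2/(p-2)}<\infty $ (always possible, and compatible with your choice $c_{n}=\left\Vert f_{n}-f\right\Vert _{2}^{-1/(p-2)}$); then $g_{n}\rightarrow 1$ a.e.\ for the whole constructed sequence by Borel--Cantelli. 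The first extraction remains, so strictly speaking you prove the lemma for a subsequence of $\{f_{n}\}$ rather than for $\{f_{n}\}$ itself; you should either say so explicitly, or rework the construction with $h_{n}=1+\left\vert f\right\vert +\left\vert f_{n}-f\right\vert $ in place of the summed majorant and address the a.e.\ convergence for the full index set. In the places where the paper invokes the lemma (to show that a limit function lies in the closed subspace $M$), a subsequence suffices, so the gap is harmless in context but should be acknowledged.
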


\begin{lemma}
Let $p>2$. Suppose an $H^{p}$ function $f$ is of the form $f=\varphi
_{1}h_{1}+\cdots +\varphi _{n}h_{n}$, where $h_{1},\ldots ,h_{n}\in
H^{2}\left( B\right) $, and $\varphi _{1},\ldots ,\varphi _{n}$ are $B$%
-inner , then $h_{1},\ldots ,h_{n}$ belong to $H^{p}\left( B\right)$.
\end{lemma}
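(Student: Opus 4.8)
The plan is to reduce to the Hilbert space situation of Theorem~\ref{thukral} by approximating $f$ in the $H^2$ norm by functions to which Lemma~4 applies, and then to transfer the conclusion back to $H^p$. Concretely, start with $f = \varphi_1 h_1 + \cdots + \varphi_n h_n$ with $h_j \in H^2(B)$. Since $H^\infty$ is dense in $H^2$ and each $\varphi_j$ is $B$-inner (hence bounded), one first produces, for each $j$, a sequence of $H^\infty$ functions approximating $h_j$ in $H^2$; a cleaner route, given Lemma~4, is to pick a single sequence $\{g_k\}$ of $H^\infty$ functions, uniformly bounded, with $g_k \to 1$ a.e.\ and $g_k f \to f$ in $H^p$. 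Then $g_k f = \varphi_1 (g_k h_1) + \cdots + \varphi_n (g_k h_n)$, but the factors $g_k h_j$ need not lie in $H^2(B)$, so I would instead multiply by $g_k$ and then apply Theorem~\ref{thukral} / the decomposition $H^2 = \bigoplus_{i=0}^{n-1} e_{i0} H^2(B)$ to re-expand.

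The key step is this: because $g_k f \in H^p \subset H^2$, Theorem~\ref{thukral} lets us write $g_k f = \sum_{i=0}^{n-1} e_{i0} c_k^{(i)}$ with $c_k^{(i)} \in H^2(B)$, and since $g_k f \in H^p$ with $p>2$, the analogue of Lemma~2 for $p>2$ — or a direct argument using Lemma~1 applied to the $H^\infty$ approximants of $g_k f$ — shows $c_k^{(i)} \in H^p(B)$. Now I compare this expansion with the given one: writing each $\varphi_j = \sum_i e_{i0}\varphi_{ij}$ with $\varphi_{ij} \in H^2(B)$, and multiplying out $\varphi_j h_j$, one expresses $f$ (and $g_k f$) through the $B$-matrix $A = (\varphi_{ij})$. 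The hypothesis that the $\varphi_j$ are $B$-inner, together with Lemma~1 (or rather the invertibility built into the associated $B$-matrix when the $\varphi_j$ span an invariant subspace freely), gives bounded linear functionals recovering $h_j$ from the coordinates of $f$; applying these same functionals to $g_k f$ recovers $g_k h_j \in H^2(B)$, and the $H^p$ convergence $g_k f \to f$ forces $g_k h_j \to h_j$ in $H^p$. Hence each $h_j$ is an $H^p$ limit of $H^p(B)$ functions, so $h_j \in H^p(B)$.

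The technical point requiring care, and the main obstacle, is that Lemma~1 (\cite[Proposition 3]{lance}) is stated only for $1 \le p \le \infty$ acting on $f \in H^\infty$, so one cannot apply it verbatim to $f \in H^p$ with $p>2$. The workaround is to apply Lemma~1 at the level of the uniformly bounded $H^\infty$ functions $g_k f$ (each of which is bounded since $f$ need only be assumed bounded on the approximating stage, or by first truncating $f$), obtaining $\|g_k h_j\|_p \le C_{j,p}\|g_k f\|_p$ with $C_{j,p}$ independent of $k$; combined with the uniform bound on $\{g_k\}$ from Lemma~4 and the convergence $g_k f \to f$ in $H^p$, this yields that $\{g_k h_j\}$ is bounded in $H^p$ and, being convergent in $H^2$ to $h_j$, actually converges to $h_j$ weakly in $H^p$ (weak-$*$ when $p=\infty$). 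A short closed-subspace argument — $H^p(B)$ is norm-closed, hence weakly closed, in $H^p$ — then delivers $h_j \in H^p(B)$, completing the proof.
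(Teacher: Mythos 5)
Your proposal breaks down at its central step. Having written $g_k f=\sum_{i}e_{i0}c_k^{(i)}$, you claim that ``applying these same functionals to $g_k f$ recovers $g_k h_j\in H^2(B)$.'' This is false on both counts: multiplication by a general $H^{\infty}$ function $g_k$ does not act diagonally on the decomposition $H^2=\bigoplus_i e_{i0}H^2(B)$ (only multiplication by functions of $B$ does), so $c_k^{(i)}$ is not $g_k$ times the $i$-th coordinate of $f$ and the recovery map applied to $g_k f$ does not produce $g_k h_j$; and $g_k h_j\notin H^2(B)$ anyway, as you yourself observe two sentences earlier. The fallback you offer --- applying Lemma 1 to $g_k f$ --- also fails: $g_k f$ need not lie in $H^{\infty}$ when $f\in H^p$, and neither ``truncating $f$'' (which destroys analyticity) nor ``assuming $f$ bounded'' (which changes the hypothesis) repairs this; moreover Lemma 1 requires a representation $\sum\varphi_i F_i(B)$ with $F_i$ functions of $B$, which $\sum_j\varphi_j(g_k h_j)$ is not. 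Finally, the step ``$g_k f\in H^p\Rightarrow c_k^{(i)}\in H^p(B)$'' is precisely the special case $\varphi_j=e_{j-1,0}$ of the lemma you are trying to prove; it is provable for $2<p<\infty$ by repeating the proof of Lemma 2 verbatim with Lemma 1, but you do not do so, and the case $p=\infty$ needs separate care.

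For comparison, the paper's proof is a duality argument that sidesteps all of this: $f\in H^p$ defines a bounded linear functional on $L^q$ with $1\le q<2$; pairing $f$ against $\overline{\varphi_1 l}$ for $l$ a polynomial in $B$, and using the mutual orthogonality of the subspaces $\varphi_j H^2(B)$ together with the $B$-innerness of $\varphi_1$, reduces $\int f\,\overline{\varphi_1 l}$ to $\int h_1\bar{l}$, giving $|\int h_1\bar{l}|\le\delta_1\Vert l\Vert_q$; this bound lets $h_1$ be extended to a bounded functional on $L^q$, so $h_1\in L^p$, and $h_1\in H^2(B)\cap L^p$ forces $h_1\in H^p(B)$. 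If you wish to salvage your route, drop the sequence $g_k$ entirely: first establish the $p>2$ analogue of Lemma 2, write $f=\sum_i e_{i0}f_i$ with $f_i\in H^p(B)$, note that $f_i=\sum_j\varphi_{ij}h_j$ where $(\varphi_{ij})=A$ is the $B$-matrix, and invert using $A^{\ast}A=I$ (the orthogonality hypothesis both you and the paper use implicitly) to obtain $h_j=\sum_i\overline{\varphi_{ij}}f_i$ pointwise a.e., whence $|h_j|\le\sum_i|f_i|\in L^p$. That pointwise-domination device is exactly what the paper itself uses in equations $(3.7)$--$(3.8)$ in the proof of Theorem 3.
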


\begin{proof}
Let 
\begin{equation*}
f=\varphi _{1}h_{1}+\varphi _{2}h_{2}+\cdots +\varphi _{r}h_{r},
\end{equation*}%
where $h_{1},h_{2},\ldots ,h_{r}\in H^{2}\left( B\right) $, and $\varphi
_{1},\ldots ,\varphi _{n}$ are $B-$ inner. Also $f$ can be identified with a
bounded linear functional $F_{f}\in L_{q}^{\ast }${\large \ }$\left( 1\leq
q<2\right) $ such that 
\begin{equation*}
F_{f}\left( g\right) =\int fg~\text{\ and }\left\vert F_{f}\left( g\right)
\right\vert \leq \delta \left\Vert g\right\Vert _{q}\text{ for all }g\in
L_{q}\text{ for some }\delta >0\text{.}
\end{equation*}%
Now for any $l\in span\left\{ 1,B,B^{2},...\right\} $ such that $l$ is a
polynomial in $B$, we have 
\begin{eqnarray*}
\left\vert \int f~\overline{\varphi _{1}l}\right\vert  &\leq &\delta
\left\Vert \varphi _{1}l\right\Vert _{q} \\
&\leq &\delta _{1}\left\Vert l\right\Vert _{q}
\end{eqnarray*}%
for some $\delta _{1}>0$. Also note that 
\begin{eqnarray*}
&&\left\vert \int f~\overline{\varphi _{1}l}\right\vert  \\
&=&\left\vert \int \left( \varphi _{1}h_{1}+\varphi _{2}h_{2}+\cdots
+\varphi _{r}h_{r}\right) ~\overline{\varphi _{1}l}\right\vert  \\
&=&\left\vert \int \varphi _{1}h_{1}\overline{\varphi _{1}l}+\int \varphi
_{2}h_{2}\overline{\varphi _{1}l}+\cdots +\int \varphi _{r}h_{r}~\overline{%
\varphi _{1}l}\right\vert  \\
&=&\left\vert \int \varphi _{1}h_{1}\overline{\varphi _{1}l}\right\vert  \\
&=&\left\vert \int h_{1}\overline{l}\right\vert .
\end{eqnarray*}%
Here $\int \varphi _{j}h_{j}\overline{\varphi _{1}l}=0$, $j=2,...,r$ because 
$\varphi _{1}H^{2}\left( B\right) \perp \varphi _{j}H^{2}\left( B\right) $,
and $\int \varphi _{1}h_{1}\overline{\varphi _{1}l}=\int h_{1}\overline{l}$
because $\varphi _{1}$ is $B-$ inner. Therefore, 
\begin{equation*}
\left\vert \int h_{1}\overline{l}\right\vert \leq \delta _{1}\left\Vert
l\right\Vert _{q}.
\end{equation*}%
Now any analytic polynomial $k\in L_{q}$ can be written as 
\begin{equation*}
k=e_{00}k_{1}\left( B\right) +\cdots +e_{r-10}k_{r}\left( B\right) 
\end{equation*}%
and 
\begin{eqnarray}
\left\vert \int h_{1}k\right\vert  &=&\left\vert \int h_{1}k_{1}+\int
e_{10}h_{1}k_{2}+\cdots +\int e_{r-10}h_{1}k_{r}\right\vert   \notag \\
&\leq &\left\vert \int h_{1}k_{1}\right\vert +\left\vert \int
e_{10}h_{1}k_{2}\right\vert +\cdots +\left\vert \int
e_{r-10}h_{1}k_{r}\right\vert   \notag \\
&=&\left\vert \int h_{1}k_{1}\right\vert .
\end{eqnarray}%
It is easily checked that all integrals in the above equation except the
first integral shall be zero. We show this by examining one of the above
integrals in question:\newline
Let $h_{1}=\alpha _{0}+\alpha _{1}B+\alpha _{2}B^{2}+\cdots ,~k_{2}=\beta
_{0}+\beta _{1}B+\beta _{2}B^{2}+\cdots $. \newline
Now, 
\begin{eqnarray*}
\int e_{10}h_{1}k_{2} &=&\left\langle e_{10}h_{1},\overline{k_{2}}%
\right\rangle \newline
\\
&=&\left\langle \dfrac{z}{1-\overline{\alpha _{2}}z}\left( \alpha
_{0}+\alpha _{1}B+\alpha _{2}B^{2}+\cdots \right) ,\overline{\beta _{0}}+%
\overline{\beta _{1}}\overline{B}+\overline{\beta _{2}}\overline{B}%
^{2}+\cdots \right\rangle  \\
&=&0.
\end{eqnarray*}%
Let $k_{1}=\gamma _{0}+\gamma _{1}B+\gamma _{2}B^{2}+\cdots $so that 
\begin{eqnarray*}
\left\vert \int h_{1}k_{1}\right\vert  &=&\left\vert \left\langle h_{1},%
\overline{k_{1}}\right\rangle \right\vert  \\
&=&\left\vert \alpha _{0}\right\vert \left\vert \gamma _{0}\right\vert  \\
&=&\left\vert \int h_{1}\right\vert \left\vert \int k_{1}\right\vert  \\
&=&\left\vert \int h_{1}\right\vert \left\vert \int k\right\vert  \\
&\leq &A\int \left\vert k\right\vert \leq A\left\Vert k\right\Vert _{q}
\end{eqnarray*}%
where $A=\int \left\vert h_{1}\right\vert $. Thus $h_{1}$acts as a bounded
linear functional on the space of polynomials, and so it can be extended to
a bounded linear functional on $L_{q}$. Call this extension as $F$. So $%
F\left( g\right) =\int h_{1}g$ for all $g\in L_{q}$. But $F\in L_{q}^{\ast }$
implies that there exists $G\in L_{p}$ such that $F\left( g\right) =\int Gg$
for all $g\in L_{q}$. In particular we have 
\begin{equation*}
\int \left( G-h_{1}\right) z^{n}=0\text{ for all }n\in \mathbb{Z}\text{.}
\end{equation*}%
Hence $h_{1}=G\in L^{p}$. In a similar fashion we get $h_{2},...,h_{r}\in
L^{p}$.
\end{proof}

\noindent

\section{The $B^{2}$ and $B^{3}$ invariant subspaces of $H^{p}$}

\begin{theorem}
Let $M$ be a closed subspace of $H^{p}$, $0<p\leq \infty ,$ such that $M$ is
invariant under $H_{1}^{\infty }\left( B\right) $ but not invariant under $%
H^{\infty }\left( B\right) $. Then there exist $B-$ inner functions $%
J_{1},\ldots ,J_{r}$ $(r\leq n)$ such that 
\begin{equation*}
M=\left( \sum\limits_{j=1}^{k}\oplus \left\langle \varphi _{j}\right\rangle
\right) \oplus \sum_{l=1}^{r}\oplus B^{2}J_{l}H^{p}\left( B\right) 
\end{equation*}%
where $k\leq 2r-1$, and for all $j=1,2,...,k,$ $\varphi _{j}=(\alpha
_{1j}+\alpha _{2j}B)J_{1}+(\alpha _{3j}+\alpha _{4j}B)J_{2}+...+(\alpha
_{2r-1,j}+\alpha _{2r,j}B)J_{r}.$ 

\begin{remark}
The proof shall also show that the matrix $A=\left( \alpha _{ij}\right)
_{2r\times k}$ satisfies $A^{\ast }A=I$, and $\alpha _{st}\neq 0$ for some $%
(s,t)\in \{1,3,\ldots ,2r-1\}\times \{1,2,\ldots ,k\}$. Also, when $0<p<1,$
the right hand side should be interpreted as being the closure of the sum in
the $H^{p}$ metric.
\end{remark}
\end{theorem}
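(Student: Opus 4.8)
The plan is to reduce the $B^2,B^3$-invariant subspace problem to the $B$-invariant subspace structure already available, exploiting the fact that the algebra $H_1^\infty(B)$ is generated by $B^2$ and $B^3$, and then to track carefully what the hypothesis "invariant under $H_1^\infty(B)$ but not under $H^\infty(B)$" forces. First I would pass to $H^2$ (or work initially with the $p=2$ case) and consider the closed $B$-invariant subspace $N$ generated by $M$, i.e. $N = \overline{\operatorname{span}}\{B^m g : g\in M,\ m\ge 0\}$. By Theorem 4.1 of \cite{sinth} (the second theorem in the excerpt), $N = \bigoplus_{l=1}^r J_l H^2(B)$ for some $B$-inner $J_1,\dots,J_r$ with $r\le n$ and with $B$-inner $B$-matrix. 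The point is that $M$ sits inside $N$, and since $M$ is $B^2$- and $B^3$-invariant (hence $B^m$-invariant for every $m\ge 2$) but not $B$-invariant, $M$ must contain $B^2 N' $ for a suitable sub-object and differ from $N$ only in "low-degree" ($1$ and $B$) behaviour. Concretely I would show $\sum_l \oplus B^2 J_l H^p(B) \subseteq M$ and that the quotient $M / \big(\sum_l \oplus B^2 J_l H^p(B)\big)$ embeds into the finite-dimensional space $\bigoplus_l (\mathbb{C}\cdot J_l \oplus \mathbb{C}\cdot BJ_l)$, which has dimension $2r$; this yields $k\le 2r$, and the failure of $B$-invariance together with a dimension/parity argument sharpens this to $k\le 2r-1$ and produces the stated asymmetry condition $\alpha_{st}\ne 0$ for some $s$ odd.

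Next I would produce the functions $\varphi_1,\dots,\varphi_k$: pick an orthonormal (in $H^2$) basis of the finite-dimensional complement described above, each basis vector being of the form $\varphi_j = \sum_{l=1}^r (\alpha_{2l-1,j} + \alpha_{2l,j}B)J_l$, and then verify that $M = \big(\bigoplus_{j=1}^k \langle\varphi_j\rangle\big) \oplus \sum_{l=1}^r \oplus B^2 J_l H^p(B)$. The orthogonality of the two summands and the internal orthogonality of the $B^2J_lH^p(B)$ pieces both come from the $B$-inner-matrix property of $(J_1,\dots,J_r)$, which guarantees $J_lH^2(B)\perp J_{l'}H^2(B)$ for $l\ne l'$ and $\langle B^aJ_l, B^bJ_l\rangle = \delta_{ab}$. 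The matrix identity $A^*A = I$ in the Remark is then exactly the statement that $\{\varphi_1,\dots,\varphi_k\}$ is orthonormal in $H^2$, rewritten in coordinates via these orthogonality relations. To get Lemma 1's boundedness estimates into play (so that the $H^p$, rather than merely $H^2$, statement holds for $p\ge 1$), I would invoke Lemmas 2, 3, and 5 of the excerpt: Lemma 2 for $1\le p<2$ to split $H^p$ along the $e_{j0}$, Lemma 5 for $p>2$ to upgrade $H^2(B)$-membership to $H^p(B)$-membership, and Lemma 4's approximation device for $p>2$ to transfer norm convergence; the case $0<p<1$ is handled by the density of $H^\infty$ (or polynomials) and interpreting all sums as $H^p$-closures, as flagged in the Remark.

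The main obstacle, I expect, is the sharp bound $k\le 2r-1$ (as opposed to the easy $k\le 2r$) together with the non-vanishing assertion $\alpha_{st}\ne 0$ for some odd $s$. Both encode the hypothesis that $M$ is \emph{not} $H^\infty(B)$-invariant: if all the "constant-term" coefficients $\alpha_{2l-1,j}$ vanished, or if $k$ were as large as $2r$, one could show $M$ would be closed under multiplication by $B$ (hence by all of $H^\infty(B)$), contradicting the hypothesis. Making this rigorous requires analysing the action of $T_B$ on the finite-dimensional complement: $T_B$ sends $\varphi_j = \sum_l(\alpha_{2l-1,j}+\alpha_{2l,j}B)J_l$ to $\sum_l(\alpha_{2l-1,j}B + \alpha_{2l,j}B^2)J_l$, whose $B^2J_l$-part already lies in $M$, so $B$-invariance of $M$ would amount to $\sum_l \alpha_{2l-1,j}BJ_l$ lying in $\operatorname{span}\{\varphi_1,\dots,\varphi_k\}$ modulo $\sum_l\oplus B^2J_lH^p(B)$ for every $j$; I would show this system is solvable precisely when the "odd" rows of $A$ are in the span of the "even-shifted" rows, force a contradiction by a rank count, and read off both $k\le 2r-1$ and the existence of a nonzero $\alpha_{st}$ with $s$ odd. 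Everything else — the orthogonal decomposition, the $A^*A=I$ identity, and the passage between values of $p$ — is bookkeeping built on Theorem \ref{thukral} and Lemmas 1–5.
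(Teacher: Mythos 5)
Your core argument for $p=2$ is the paper's own: the paper sets $M_{1}=\overline{H^{\infty }(B)\cdot M}$ (your $N$), notes $B^{2}M_{1}\subset M\subset M_{1}$ with both containments strict precisely because $M$ is not $B$-invariant, decomposes $M_{1}=J_{1}H^{2}(B)\oplus \cdots \oplus J_{r}H^{2}(B)$ by Theorem 2, and writes $M=(M\ominus B^{2}M_{1})\oplus B^{2}M_{1}$. One simplification over your sketch: no rank count on the action of $T_{B}$ is needed for $k\leq 2r-1$. Since $M\subsetneq M_{1}$ strictly, $M\ominus B^{2}M_{1}$ is a proper subspace of the $2r$-dimensional space $M_{1}\ominus B^{2}M_{1}$, and that is the entire argument; the only place non-invariance is used at the coefficient level is to see that the odd-indexed rows of $A$ cannot all vanish (otherwise $B\varphi _{j}\in B^{2}M_{1}\subset M$ for every $j$, making $M$ $B$-invariant).

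The genuine gaps are in the reductions for $p\neq 2$, which is where most of the paper's work lies and which your proposal compresses into a sentence. (i) For $0<p<1$ and $1\leq p<2$ one must show $M\cap H^{2}\neq \{0\}$ and that $M\cap H^{2}$ is dense in $M$; ``density of $H^{\infty }$ in $H^{p}$'' does not give this, because the regularization must stay inside $M$. The paper factors $f\in M$ into finitely many $H^{2}$ functions, expands each along the $e_{j0}H^{2}(B^{2})$ decomposition, and builds bounded outer multipliers of the form $\exp \{-(|k|^{1/2}+i\widetilde{|k|^{1/2}})/n\}$ composed with $B^{2}(z)$; these are boundedly approximable by polynomials in $B^{2}$ and hence preserve $M$ using only $B^{2}$-invariance. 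That construction is the missing idea. Moreover, for $1\leq p<2$, removing the closure bar from $J_{1}H^{p}(B)\oplus \cdots \oplus J_{r}H^{p}(B)$ requires the duality and $B$-matrix computation of equations $(3.7)$--$(3.8)$, not merely Lemma 2. (ii) For $2<p\leq \infty $ the paper does not use the $B$-invariant hull; it takes $M_{1}=\overline{M}$ in $H^{2}$ and must first prove, via a page-long argument with the subspaces $N_{j}=\{h:\varphi _{j}h\in M\}$, that $M_{1}$ cannot be $B$-invariant before the $p=2$ theorem can be applied to it in the stated form. Your alternative of working with the hull $N$ would require producing the decomposition of $M$ relative to $B^{2}N$ directly in $H^{p}$, where no orthogonal complement is available, and the proposal does not indicate how. (A small bookkeeping point: what you cite as ``Lemma 5'' is Lemma 4 in the paper, and the approximation lemma is Lemma 3.)
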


\begin{proof}
\textbf{The case }$p=2$. Using the initial line of argument as in \cite{dprs}
we define $M_{1}=\overline{H^{\infty }\left( B\right) \cdot M}$. It is
easily seen that $M_{1}$ is a $B-$invariant subspace of $H^{2}$. Observe
that 
\begin{equation}
M_{1}\supset M\supset H_{1}^{\infty }\left( B\right) \cdot M\supset 
\overline{B^{2}H^{\infty }\left( B\right) \cdot M}=B^{2}M_{1}.
\end{equation}%
Therefore 
\begin{equation}
B^{2}M_{1}\subset M\subset M_{1}
\end{equation}%
Note that all containments in the above equation are strict. For if $M=M_{1}$
or $M=B^{2}M_{1}$ it would then mean that $M$ is invariant under $H^{\infty
}\left( B\right) $, which is a contradiction. By Theorem 2, there exist $B-$
inner functions $J_{1},\ldots ,J_{r}$ $\left( r\leq n\right) $ such that 
\begin{eqnarray*}
M_{1} &=&J_{1}H^{2}\left( B\right) \oplus \cdots \oplus J_{r}H^{2}\left(
B\right)  \\
&=&\left( \left\langle J_{1}\right\rangle \oplus \left\langle
BJ_{1}\right\rangle \oplus B^{2}J_{1}H^{2}(B)\right) \oplus \cdots \oplus
\left( \left\langle J_{r}\right\rangle \oplus \left\langle
BJ_{r}\right\rangle \oplus B^{2}J_{r}H^{2}(B)\right)  \\
&=&\left( \left\langle J_{1}\right\rangle \oplus \left\langle
BJ_{1}\right\rangle \oplus \cdots \oplus \left\langle J_{r}\right\rangle
\oplus \left\langle BJ_{r}\right\rangle \right) \oplus B^{2}M_{1}.
\end{eqnarray*}%
So $M_{1}\ominus B^{2}M_{1}$ has dimension $2r$, and hence $M\ominus
B^{2}M_{1}$ has dimension $k$, where $k\leq 2r-1$. Let $\varphi _{1},\ldots
,\varphi _{k}$ be an orthonormal basis for $M\ominus B^{2}M_{1}$. Now 
\begin{eqnarray*}
M &=&\left[ M\ominus B^{2}M_{1}\right] \oplus B^{2}M_{1} \\
&=&\left( \sum\limits_{j=1}^{k}\oplus \left\langle \varphi _{j}\right\rangle
\right) \oplus B^{2}J_{1}H^{2}\left( B\right) \oplus \cdots \oplus
B^{2}J_{r}H^{2}\left( B\right) .
\end{eqnarray*}%
Since $\varphi _{j}\in M\ominus B^{2}M_{1}\subset M_{1}\ominus B^{2}M_{1}$,
we see that each $\varphi _{j}$ is of the form $\alpha _{1j}J_{1}+\alpha
_{2j}J_{1}B+\alpha _{3j}J_{2}+\alpha _{4j}J_{2}B+\cdots +\alpha
_{2r-1,j}J_{r}+\alpha _{2r,j}J_{r}B$. The conditions $\left\Vert \varphi
_{j}\right\Vert _{2}^{2}=1$ and $\left\langle \varphi _{j},\varphi
_{i}\right\rangle =0$ for $j\neq i$, imply that $\left\vert \alpha
_{1j}\right\vert ^{2}+\left\vert \alpha _{2j}\right\vert ^{2}+\cdots
+\left\vert \alpha _{2r,j}\right\vert ^{2}=1$, and $\alpha _{1j}\overline{%
\alpha _{1i}}+\alpha _{2j}\overline{\alpha _{2i}}+\cdots +\alpha _{2r,j}%
\overline{\alpha _{2r,i}}=0$. In addition the $k$ tuples $\left( \alpha
_{i1},\alpha _{i2},\ldots ,\alpha _{ik}\right) $, $i=1,3,\ldots ,2r-1$
cannot be simultaneously zero, otherwise $M$ would become $B-$ invariant.

\textbf{The case }$0<p<1$. Observe that every $H^{p}$ function $f$, can be
written as $f=IO$, where $I$ is an inner function and $O\in H^{p}$ is an
outer function. Choose $n$ such that $2^{n}p>2$, so that we can express $f$
as a product of $H^{2}$ functions: 
\begin{equation*}
f=IO^{\frac{1}{2^{n}}}O^{\frac{1}{2^{n}}}\cdots O^{\frac{1}{2^{n}}}.
\end{equation*}%
We first show that $M\cap H^{2}\neq \lbrack 0]$. Let $0\neq f\in M$. Then $f$
can be written as 
\begin{equation*}
f=f_{1}f_{2}\cdots f_{m},
\end{equation*}%
where $f_{1}$, $f_{2}$,\ldots ,$f_{m}\in H^{2}$. In view of Theorem \ref%
{thukral}, we can express each $f_{l}$ as 
\begin{equation*}
f_{l}=e_{00}g_{1}^{(l)}+\cdots +e_{r0}g_{r}^{(l)},
\end{equation*}%
for some $g_{1}^{(l)},\ldots ,g_{r}^{(l)}\in H^{2}\left( B^{2}\right) $. It
is known that the operator $T:H^{2}\longrightarrow H^{2}$ defined by $%
Th=h\left( B^{2}\left( z\right) \right) $ is an isometry, and its range is $%
H^{2}\left( B^{2}\right) $ (see \cite{barb}). So for each $g_{j}^{(l)}$,
there exists $k_{j}^{(l)}\in H^{2}$such that $g_{j}^{(l)}=k_{j}^{(l)}\left(
B^{2}\left( z\right) \right) $. Define 
\begin{equation*}
q_{jl}\left( z\right) :=\exp \left\{ \frac{-\left\vert k_{j}^{\left(
l\right) }\left( z\right) \right\vert ^{\frac{1}{2}}-i\widetilde{\left\vert
k_{j}^{\left( l\right) }\left( z\right) \right\vert ^{\frac{1}{2}}}}{2}%
\right\} .
\end{equation*}%
Here $\sim $ denotes the harmonic conjugate. Then $\left\vert q_{jl}\left(
z\right) \right\vert \leq 1$, and thus $h_{l}\left( z\right) :=q_{1l}\left(
z\right) q_{2l}\left( z\right) \cdots q_{rl}\left( z\right) \in H^{\infty }$.

Note that 
\begin{eqnarray*}
h_{l}\left( B^{2}(z)\right) f_{l}\left( z\right)  &=&e_{00}h_{l}\left(
B^{2}(z)\right) g_{1}^{(l)}\left( z\right) +\cdots +e_{r0}h_{l}\left(
B^{2}(z)\right) g_{r}^{(l)}\left( z\right)  \\
&=&e_{00}h_{l}\left( B^{2}(z)\right) k_{1}^{(l)}\left( B^{2}\left( z\right)
\right) +\cdots +e_{r0}h_{l}\left( B^{2}(z)\right) k_{r}^{(l)}\left(
B^{2}\left( z\right) \right) ,
\end{eqnarray*}%
which clearly belongs to $H^{\infty }$. This implies that $h_{1}\left(
B^{2}\left( z\right) \right) \cdots h_{m}\left( B^{2}\left( z\right) \right)
f=h_{1}\left( B^{2}\left( z\right) \right) f_{1}\cdots h_{m}\left(
B^{2}\left( z\right) \right) f_{m}\in H^{\infty }$. Since $h_{1}\left(
B^{2}\left( z\right) \right) \cdots h_{m}\left( B^{2}\left( z\right) \right)
\in H^{\infty }$ its Cesaro means $\left\{ p_{n}\left( B^{2}\right) \right\} 
$, which is a sequence of polynomials, shall converge to $h_{1}\left(
B^{2}\left( z\right) \right) \cdots h_{m}\left( B^{2}\left( z\right) \right)
a.e.$. Hence, by the Dominated Convergence Theorem, we see that $p_{n}\left(
B^{2}\right) f\rightarrow h_{1}\left( B^{2}\left( z\right) \right) \cdots
h_{m}\left( B^{2}\left( z\right) \right) f$ in $H^{p}$. Therefore, $%
h_{1}\left( B^{2}\left( z\right) \right) \cdots h_{m}\left( B^{2}\left(
z\right) \right) f\in M$, because $M$ is invariant under $B^{2}$. This
establishes $M\cap H^{2}\neq \{0\}.$ Next we claim that $M\cap H^{2}$ is
dense in $M$. The density will also imply that $M\cap H^{2}$ is not $B-$
invariant, otherwise it would force $M$ to be $B-$ invariant, which is not
possible. It is trivial to note that $\overline{M\cap H^{2}}\subseteq M$
(bar denotes closure in $H^{p}$). Let $f\in M$. We can express $f$ as 
\begin{equation*}
f=f_{1}f_{2}\cdots f_{2^{m}},
\end{equation*}%
where each $f_{l}\in H^{2}$, and $m$ is chosen so that $2^{m}p>2$. As argued
previously we can express each $f_{l}$ as 
\begin{equation*}
f_{l}=e_{00}k_{1}^{(l)}\left( B^{2}\left( z\right) \right) +\cdots
+e_{r0}k_{r}^{(l)}\left( B^{2}\left( z\right) \right) ,
\end{equation*}%
for certain $k_{1}^{(l)},\ldots ,k_{r}^{(l)}\in H^{2}$. Define 
\begin{equation*}
q_{n}^{(jl)}(z)=exp\left( \dfrac{-|k_{j}^{\left( l\right) }\left( z\right)
|^{\frac{1}{2}}-i\widetilde{|k_{j}^{\left( l\right) }\left( z\right) |}^{%
\frac{1}{2}}}{n}\right) 
\end{equation*}%
($\sim $ denotes the harmonic conjugate which exists for $L^{2}$ functions).
Then $q_{n}^{(jl)}\in H^{\infty }$ and $\left\vert q_{n}^{(jl)}\right\vert
~\leq 1$. For each $l=1,...,2^{m}$, the function $%
h_{n}^{(l)}(z)=q_{n}^{(1l)}(z)\cdots q_{n}^{(rl)}\left( z\right) $ belongs
to $H^{\infty }$ and $h_{n}^{(l)}\left( B^{2}\left( z\right) \right) $
multiplies $f_{l}${\large \ }into{\large \ }$H^{\infty }$. This implies that 
\begin{equation*}
h_{n}^{(1)}(B^{2}\left( z\right) )\cdots h_{n}^{(2^{m})}(B^{2}\left(
z\right) )f\in H^{\infty }
\end{equation*}%
Since $h_{n}^{(1)}(B^{2}\left( z\right) )\cdots h_{n}^{(2^{m})}(B^{2}\left(
z\right) )\rightarrow 1~a.e.$, we have 
\begin{equation*}
h_{n}^{(1)}(B^{2}\left( z\right) )\cdots h_{n}^{(2^{m})}(B^{2}\left(
z\right) )f\rightarrow f~a.e.
\end{equation*}%
so that 
\begin{equation*}
\left\vert h_{n}^{(1)}(B^{2}\left( z\right) )\cdots
h_{n}^{(2^{m})}(B^{2}\left( z\right) )f-f\right\vert ^{p}\rightarrow 0\text{ 
}a.e
\end{equation*}%
Moreover%
\begin{equation*}
\left\vert h_{n}^{(1)}(B^{2}\left( z\right) )\cdots
h_{n}^{(2^{m})}(B^{2}\left( z\right) )f-f\right\vert ^{p}\leq
2^{p}\left\vert f\right\vert ^{p}
\end{equation*}%
so by the Dominated Convergence Theorem, 
\begin{equation*}
h_{n}^{(1)}(B^{2}\left( z\right) )\cdots h_{n}^{(2^{m)}}(B^{2}\left(
z\right) )f\rightarrow f\in H^{p}.
\end{equation*}%
We claim that $h_{n}^{(1)}(B^{2})\cdots h_{n}^{(2^{m})}(B^{2})f\in M$. To
prove this claim we proceed as follows. For each $n$, there exists a
sequence of polynomials $\{p_{k}^{(n)}\}$ such that%
\begin{equation*}
p_{k}^{(n)}(B^{2}(z))\rightarrow h_{n}^{(1)}(B^{2}\left( z\right) )\cdots
h_{n}^{(2^{m})}(B^{2}\left( z\right) )
\end{equation*}%
The sequence $\{p_{k}^{(n)}\}$ is the Cesaro means of $%
h_{n}^{(1)}...h_{n}^{(2^{m})}$ and it converges boundedly and pointwise. It
is then easy to see by means of the Dominated Convergence Theorem that $%
p_{k}^{(n)}(B^{2})f$ converges to $h_{n}^{(1)}(B^{2})\cdots
h_{n}^{(2^{m})}(B^{2})f$ in $H^{p}$. The claim now follows in view of the
fact that $M$ is invariant under $B^{2}$ and the fact that $%
p_{k}^{(n)}(B^{2})f\in M$\ . By the validity of the result for the case $p=2$%
, we have 
\begin{equation*}
M\cap H^{2}=\left( \sum\limits_{j=1}^{2r-1}\oplus \left\langle \varphi
_{j}\right\rangle \right) \oplus B^{2}J_{1}H^{2}\left( B\right) \oplus
\cdots \oplus B^{2}J_{r}H^{2}\left( B\right) ,
\end{equation*}%
and hence 
\begin{eqnarray*}
M &=&\overline{\left( \sum\limits_{j=1}^{2r-1}\oplus \left\langle \varphi
_{j}\right\rangle \right) \oplus B^{2}J_{1}H^{2}\left( B\right) \oplus
\cdots \oplus B^{2}J_{r}H^{2}\left( B\right) } \\
&=&\left( \sum\limits_{j=1}^{2r-1}\oplus \left\langle \varphi
_{j}\right\rangle \right) \oplus B^{2}\left( \overline{J_{1}H^{2}\left(
B\right) \oplus \cdots \oplus J_{r}H^{2}\left( B\right) }\right) 
\end{eqnarray*}%
(bar denotes closure in $H^{p}$). We can easily see that $%
B^{2}(J_{1}H^{p}(B)\oplus ...\oplus J_{r}H^{p}(B))\subset B^{2}\left( 
\overline{J_{1}H^{2}\left( B\right) \oplus \cdots \oplus J_{r}H^{2}\left(
B\right) }\right) \subset B^{2}\left( \overline{J_{1}H^{p}\left( B\right)
\oplus \cdots \oplus J_{r}H^{p}\left( B\right) }\right) $ and so upon taking
the closure in $H^{p}$ of all three subspaces we shall get equality
throughout so that\ $B^{2}\left( \overline{J_{1}H^{2}\left( B\right) \oplus
\cdots \oplus J_{r}H^{2}\left( B\right) }\right) =B^{2}\left( \overline{%
J_{1}H^{p}\left( B\right) \oplus \cdots \oplus J_{r}H^{p}\left( B\right) }%
\right) $\ and this gives us the characterisation for the case $0<p<1.$

\textbf{The case }$1\leq p<2$. The arguments and conclusions above in the
case $0<p<1$ are also valid for this case and so certainly%
\begin{equation*}
M=\left(\sum\limits_{j=1}^{2r-1}\oplus \left\langle \varphi
_{j}\right\rangle \right)\oplus \left( \overline{J_{1}H^{p}\left( B\right)
\oplus \cdots \oplus J_{r}H^{p}\left( B\right) }\right)
\end{equation*}
where the bar denotes closure in $H^{p},1\leq p<2$. Let $N=\overline{%
J_{1}H^{p}\left( B\right) \oplus \cdots \oplus J_{r}H^{p}\left( B\right) }$
so that $M=(\sum\limits_{j=1}^{2r-1}\oplus \left\langle \varphi
_{j}\right\rangle )\oplus N$. Then as a closed subspace of $H^{p}$, $N$ is
invariant under multiplication by $B$. It can be verified that 
\begin{equation*}
N\cap H^{2}=J_{1}H^{2}\left( B\right) \oplus \cdots \oplus J_{r}H^{2}\left(
B\right) .
\end{equation*}%
Any arbitrary $g\in J_{i}H^{p}\left( B\right) $ can be written as $g=J_{i}f$%
, for some $f\in H^{p}\left( B\right) $. Then the Cesaro means of $f$
denoted by the sequence of polynomials, $\left\{ p_{n}\right\} $, is such
that $p_{n}(z)\rightarrow f(z)$ in $H^{p}$. Hence $p_{n}(B)\rightarrow f(B)$
in $H^{p}.$ But $J_{i}\in H^{\infty }$, so $p_{n}(B)J_{i}\rightarrow
J_{i}f(B)$ in $H^{p}$. Because $N$ is $B-$ invariant, we have $%
p_{n}(B)J_{i}\in N$, and the fact that $N$ is closed implies that $J_{i}f\in
N$. This establishes that $J_{1}H^{p}\left( B\right) \oplus \cdots \oplus
J_{r}H^{p}\left( B\right) \subset N$. Now we establish the inclusion in the
other direction. In a fashion, similar to as shown above, for any $f\in N$,
we can construct an outer function $K\in H^{\infty }$ such that $Kf\in N\cap
H^{2}$. Therefore, 
\begin{equation}
Kf=J_{1}h_{1}+J_{2}h_{2}+\cdots +J_{r}h_{r},
\end{equation}%
for some uniquely determined $h_{1},h_{2},\ldots ,h_{r}\in H^{2}(B)\subset
H^{p}\left( B\right) $. Since $f\in H^{p}$, by Lemma $2$, we can express it
uniquely as 
\begin{equation}
f=e_{00}f_{1}+e_{10}f_{2}+\cdots +e_{n-1,0}f_{n},
\end{equation}%
for some $f_{1},\ldots ,f_{n}\in H^{p}\left( B\right) $. Therefore, 
\begin{equation}
Kf=e_{00}Kf_{1}+e_{10}Kf_{2}+\cdots +e_{n-1,0}Kf_{n}.
\end{equation}%
Because $J_{1},J_{2},\ldots ,J_{r}$ are $B-$ inner, we can write 
\begin{equation*}
\begin{array}{c}
J_{1}=e_{00}\varphi _{10}+e_{10}\varphi _{11}+\cdots +e_{n-1,0}\varphi
_{1,n-1} \\ 
J_{2}=e_{00}\varphi _{20}+e_{10}\varphi _{21}+\cdots +e_{n-1,0}\varphi
_{2,n-1} \\ 
\vdots \\ 
J_{r}=e_{00}\varphi _{r0}+e_{10}\varphi _{r1}+\cdots +e_{n-1,0}\varphi
_{r,n-1},%
\end{array}%
\end{equation*}%
where the $B-$ matrix $\left( \varphi _{ij}\right) _{r\times n}$ satisfies $%
\left( \varphi _{ij}\right) _{r\times n}\left( \overline{\varphi _{ji}}%
\right) _{n\times r}=I$.\newline
Equation $3.3$ now becomes 
\begin{eqnarray}
Kf &=&e_{00}\left( \varphi _{10}h_{1}+\varphi _{20}h_{2}+\cdots +\varphi
_{r0}h_{r}\right) +  \notag \\
&&e_{10}\left( \varphi _{11}h_{1}+\varphi _{21}h_{2}+\cdots +\varphi
_{r1}h_{r}\right)  \notag \\
&&+\cdots +  \notag \\
&&e_{n-1,0}\left( \varphi _{1,n-1}h_{1}+\varphi _{2,n-1}h_{2}+\cdots
+\varphi _{r,n-1}h_{r}\right)
\end{eqnarray}%
From equations $(3.5)$ and $(3.6)$ we see that 
\begin{equation*}
\begin{array}{c}
Kf_{1}=\varphi _{10}h_{1}+\varphi _{20}h_{2}+\cdots +\varphi _{r0}h_{r} \\ 
\vdots \\ 
Kf_{n}=\varphi _{1,n-1}h_{1}+\varphi _{2,n-1}h_{2}+\cdots +\varphi
_{r,n-1}h_{r}.%
\end{array}%
\end{equation*}%
This in matrix form can be written as 
\begin{equation}
\left( Kf_{i}\right) _{1\times n}=\left( h_{i}\right) _{1\times r}\left(
\varphi _{ij}\right) _{r\times n}
\end{equation}%
Taking the conjugate transpose we get 
\begin{equation}
\left( \overline{Kf_{i}}\right) _{n\times 1}=\left( \overline{\varphi _{ji}}%
\right) _{n\times r}\left( \overline{h_{i}}\right) _{r\times 1}
\end{equation}%
By multiplying equations $(3.7)$ and $(3.8)$ we get: 
\begin{eqnarray*}
\left\vert \frac{h_{1}}{K}\right\vert ^{2}+\cdots +\left\vert \frac{h_{r}}{K}%
\right\vert ^{2} &=&\left\vert f_{1}\right\vert ^{2}+\cdots +\left\vert
f_{n}\right\vert ^{2} \\
&\leq &\left( \left\vert f_{1}\right\vert +\cdots +\left\vert
f_{n}\right\vert \right) ^{2}.
\end{eqnarray*}%
Thus for $j=1,...,r$, we have 
\begin{equation*}
\left\vert \frac{h_{j}}{K}\right\vert \leq \left\vert f_{1}\right\vert
+\cdots +\left\vert f_{n}\right\vert
\end{equation*}%
and this clearly implies that $\dfrac{h_{j}}{K}\in L^{p}$. Because $K$ is
outer, we have $\dfrac{h_{j}}{K}\in H^{p}$. Then from (3.3) we conclude that 
$f$ is in $J_{1}H^{p}\left( B\right) \oplus \cdots \oplus J_{r}H^{p}\left(
B\right) $ so that $N\subset J_{1}H^{p}\left( B\right) \oplus \cdots \oplus
J_{r}H^{p}\left( B\right) $ and so $N=J_{1}H^{p}\left( B\right) \oplus
\cdots \oplus J_{r}H^{p}\left( B\right) $ and this then implies that $%
M=\left(\sum\limits_{j=1}^{k}\oplus \left\langle \varphi _{j}\right\rangle
\right)\oplus \sum_{l=1}^{r}\oplus B^{2}J_{l}H^{p}\left( B\right)$.

\textbf{The case }$1<p\leq \infty $. Let $M_{1}=\overline{M}$ denote the
closure of $M$ in $H^{2}$. Suppose $M_{1}$ is invariant under multiplication
by $B(z)$, then by Theorem 2, we can write 
\begin{equation*}
M_{1}=\varphi _{1}H^{2}\left( B\right) \oplus \cdots \oplus \varphi
_{n}H^{2}\left( B\right) ,
\end{equation*}%
for some $B-$ inner functions $\varphi _{1},\ldots ,\varphi _{n}$. It
follows that any element $f\in M$ can be written as 
\begin{equation*}
f=\varphi _{1}h_{1}+\cdots +\varphi _{n}h_{n},
\end{equation*}%
for some $h_{1},\ldots ,h_{n}\in H^{2}\left( B\right) $. By Lemma 2, $%
h_{j}\in H^{p}${\large \ }(in fact $h_{j}\in ${\large \ }$H^{p}\left(
B\right) $). We claim that $\Phi _{k}=\varphi _{k}h_{k}\in M$. Since $\Phi
_{k}\in \overline{M}$, so there exists a sequence $\left\{
h_{l}^{(k)}\right\} _{l=1}^{\infty }\subset M$ such that $%
h_{l}^{(k)}\longrightarrow \Phi _{k}$ in $H^{2}$ as $l\rightarrow \infty $.
Moreover we can write 
\begin{equation*}
h_{l}^{(k)}=e_{00}h_{l}^{(k,1)}+\cdots +e_{n-1,0}h_{l}^{(k,n)}
\end{equation*}%
and 
\begin{equation*}
\Phi _{k}=e_{00}\Phi _{k}^{(1)}+\cdots +e_{n-1,0}\Phi _{k}^{(n)}.
\end{equation*}%
Therefore, $e_{j0}h_{l}^{(k,,j)}\left( B\right) \longrightarrow e_{j0}\Phi
_{k}^{(j)}\left( B\right) $ in $H^{2}$. But multiplication by $e_{j0}$ is an
isometry on $H^{2}\left( B\right) $, so we have $h_{l}^{(k,j)}\left(
B\right) \longrightarrow \Phi _{k}^{(j)}\left( B\right) $ in $H^{2}$. This
implies that $h_{l}^{(k,j)}\left( z\right) \longrightarrow \Phi
_{k}^{(j)}\left( z\right) $ in $H^{2}$. By Lemma 2, there exists a sequence $%
\left\{ g_{l}^{(k,j)}\right\} _{l=1}^{\infty }\subset H^{\infty }$ such that 
$g_{l}^{(k,j)}\left( z\right) h_{l}^{(k,j)}\left( z\right) \longrightarrow
\Phi _{k}^{(j)}\left( z\right) $ in $H^{p}$ as $l\rightarrow \infty $.
Define 
\begin{equation*}
g_{l}^{(k)}=g_{l}^{(k,1)}\left( B^{2}\right) \cdots g_{l}^{(k,n)}\left(
B^{2}\right) 
\end{equation*}%
so that $\left\{ g_{l}^{(k)}\right\} _{l=1}^{\infty }$ is uniformly bounded
and converges to $1~a.e.$ Consider 
\begin{eqnarray*}
g_{l}^{(k)}h_{l}^{(k)}
&=&\sum\limits_{j=1}^{n}e_{j-1,0}g_{l}^{(k)}h_{l}^{(k,j)} \\
&=&\sum\limits_{j=1}^{n}e_{j-1,0}g_{l}^{(k,1)}\left( B^{2}\right) \cdots
g_{l}^{(k,n)}\left( B^{2}\right) h_{l}^{(k,j)}
\end{eqnarray*}%
We now show that $g_{l}^{(k,1)}\left( B^{2}\right) \cdots
g_{l}^{(k,n)}\left( B^{2}\right) h_{l}^{(k,j)}\longrightarrow \Phi _{k}^{(j)}
${\large \ in }$H^{p}$. Note that the sequence $\theta
_{l}=g_{l}^{(k,1)}\left( B^{2}\right) \cdots g_{l}^{(k,j-1)}\left(
B^{2}\right) g_{l}^{(k,j+1)}\left( B^{2}\right) \cdots g_{l}^{(k,n)}\left(
B^{2}\right) $ is uniformly bounded and converges to $1a.e.$, and the
sequence $\psi _{l}=g_{l}^{(k,j)}h_{l}^{(k,j)}\longrightarrow \Phi _{k}^{(j)}
$ in $H^{p}$. It can be shown that $\theta _{l}\psi _{l}\longrightarrow \Phi
_{k}^{(j)}$ in $H^{p}$, and hence $g_{l}^{(k)}h_{l}^{(k)}\longrightarrow
\Phi _{k}$ in $H^{p}$. Now by the invariance of $M$ we have $\Phi _{k}\in M$%
. This means that $M$ can be written as 
\begin{equation*}
M=\varphi _{1}N_{1}\oplus \varphi _{2}N_{2}\oplus \cdots \oplus \varphi
_{n}N_{n},
\end{equation*}%
where 
\begin{equation*}
N_{j}=\left\{ h\in H^{p}\left( B\right) :\varphi _{j}h\in M\right\} 
\end{equation*}%
is a closed subspace of $H^{p}$. It is easy to see that $N_{j}$ is invariant
under $B^{2}$ and $B^{3}$, and is also dense in $H^{2}\left( B\right) $.%
\newline
Note that all $N_{j}s$ cannot be $B-$ invariant simultaneously. For if they
are then it would imply that $M$ is also $B-$invariant which is not
possible. Thus, some $N_{j}$ is not invariant under $B$. Without loss of
generality assume that $N_{1}$is not invariant. We show that even this is
not possible. For any $f\in H^{p}\left( B\right) $, we can find a sequence $%
\left\{ f_{l}\right\} $ in $N_{1}$ such that $f_{l}\rightarrow f$ in $H^{2}$
because $N_{1}$ is dense in $H^{2}\left( B\right) $. Once again we can write 
\begin{equation*}
f_{l}=e_{00}f_{l}^{(1)}+\cdots +e_{n-1,0}f_{l}^{(n)}
\end{equation*}%
and 
\begin{equation*}
f=e_{00}f^{(1)}+\cdots +e_{n-1,0}f^{(n)}
\end{equation*}%
so that $f_{l}^{(j)}\rightarrow f^{(j)}$ in $H^{2}$. Again by Lemma $3$,
there exists a sequence $\left\{ g_{l}^{(j)}\right\} _{l=1}^{\infty }$in $%
H^{\infty }$, such that $g_{l}^{(j)}f_{l}^{(j)}\rightarrow f^{(j)}$ in $H^{p}
$. Taking $g_{l}=g_{l}^{(1)}\left( B\right) \cdots g_{l}^{(n)}\left(
B\right) $, it follows that $B^{m}g_{l}f_{l}\rightarrow B^{m}f$ in $H^{p}$
for $m\geq 2$. Thus $B^{m}f\in N_{1}$ for $m\geq 2$. \newline
Let us choose $f=1$. So we have $B^{2}H^{p}\left( B\right) \subset N_{1}$.
This inclusion must be strict as $N_{1}$ is not invariant under $B$. So $%
N_{1}$ is of the form 
\begin{equation*}
N_{1}=A\oplus B^{2}H^{p}\left( B\right) ,
\end{equation*}%
where $A$ is a non zero subspace. We know that 
\begin{equation*}
A\subsetneq N_{1}\varsubsetneq H^{2}\left( B\right) .
\end{equation*}%
If $1$ and $B$ belong to $A$, then $N_{1}=H^{p}\left( B\right) $ which is
not possible.\newline
So $A=\left\langle \alpha +\beta B\right\rangle $, where $\alpha \neq 0$.
Again the density of $N_{1}$ implies that there exists a sequence $\left\{
\alpha _{n}\left( \alpha +\beta B\right) +B^{2}f_{n}\right\} \subset N_{1}$
that converges to $1$ in $H^{2}$. This gives 
\begin{eqnarray*}
B^{2}f_{n} &\rightarrow &0, \\
\alpha \alpha _{n} &\rightarrow &1,\text{ and} \\
\beta \alpha _{n} &\rightarrow &0
\end{eqnarray*}%
Therefore, $\beta =0$, which means that there cannot be a sequence in $N_{1}$
that converges to $B$ in $H^{2}$ norm. This contradicts the fact that $N_{1}$
is dense in $H^{2}\left( B\right) $. This contradiction stems from the fact
that $M_{1}$ is assumed to be invariant under $B$. Thus $M_{1}$ is invariant
under $B^{2}$ and $B^{3}$ but not under $B$. Now by the validity of our
result on $H^{2}$, there exist $B-$ inner functions $J_{1},\ldots ,J_{r}$, $%
r\leq n$, such that%
\begin{equation*}
M_{1}=\left( \sum\limits_{j=1}^{2r-1}\oplus \left\langle \varphi
_{j}\right\rangle \right) \oplus B^{2}J_{1}H^{2}\left( B\right) \oplus
\cdots \oplus B^{2}J_{r}H^{2}\left( B\right) 
\end{equation*}%
where $\varphi _{j}=\alpha _{1}^{j}J_{1}+\alpha _{2}^{j}J_{1}B+\alpha
_{3}^{j}J_{2}+\alpha _{4}^{j}J_{2}B+\cdots +\alpha _{2n-1}^{j}J_{n}+\alpha
_{2n}^{j}J_{n}B$. From the form of $\varphi _{j}$ it is clear that $\varphi
_{j}\in H^{p}$. Using the arguments already used in the proof it can be
shown that $\varphi _{j}\in M$. Also essentially repeating the arguments as
in the previous case we can easily establish that $B^{2}J_{1},\ldots
,B^{2}J_{r}\in M$. \newline
Thus 
\begin{equation*}
\left( \sum\limits_{j=1}^{2r-1}\oplus \left\langle \varphi _{j}\right\rangle
\right) \oplus B^{2}J_{1}H^{p}\left( B\right) \oplus \cdots \oplus
B^{2}J_{r}H^{p}\left( B\right) \subset M.
\end{equation*}%
To establish the reverse inclusion consider any $f\in M$. By virtue of the
characterization of $M_{1},$ $f=\alpha _{1}\varphi _{1}+\cdots +\alpha
_{2n-1}\varphi _{2n-1}+B^{2}J_{1}h_{1}+\cdots +B^{2}J_{n}h_{n}$. Note that $%
B^{2}J_{1}h_{1}+\cdots +B^{2}J_{r}h_{r}=f-\alpha _{1}\varphi _{1}-\cdots
-\alpha _{2r-1}\varphi _{2r-1}\in H^{p}$. Hence, by Lemma 4, $h_{1},\ldots
,h_{n}\in H^{p}\left( B\right) $. Hence $f\in \left(
\sum\limits_{j=1}^{2r-1}\oplus \left\langle \varphi _{j}\right\rangle
\right) \oplus B^{2}J_{1}H^{p}\left( B\right) \oplus \cdots \oplus
B^{2}J_{r}H^{p}\left( B\right) $ so that $M\subset \left(
\sum\limits_{j=1}^{2r-1}\oplus \left\langle \varphi _{j}\right\rangle
\right) \oplus B^{2}J_{1}H^{p}\left( B\right) \oplus \cdots \oplus
B^{2}J_{r}H^{p}\left( B\right) .$ This completes the proof of the theorem.
\end{proof}

\section{The $B-$ invariant subspaces of $H^{p}$}

\bigskip The ideas from the above proof can be applied to derive a new
factorization free proof of the following invariant subspace theorem
obtained in \cite{lance} for the cases $1\leq p\leq \infty $, $p\neq 2$. In
addition we have extended the theorem to the case $0<p<1.$ 

\begin{theorem}
Let $M$ be a closed subspace of $H^{p}$, $0<p\leq \infty $, $p\neq 2$, such
that $M$ is invariant under $H^{\infty }\left( B\right) $. Then there exist $%
B-$ inner functions $J_{1},\ldots ,J_{r}$, $r\leq n$, such that 
\begin{equation*}
M=J_{1}H^{p}\left( B\right) \oplus \cdots \oplus J_{r}H^{p}\left( B\right) .
\end{equation*}%
When $0<p<1$, then, as the proof will show, the right hand side is to be
read as being dense in $M$ i.e. its closure in the $H^{p}$ metric is all of $%
M$.
\end{theorem}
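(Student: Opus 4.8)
The plan is to reduce the $B$-invariant case to the already proved $H_1^\infty(B)$-invariant case (Theorem 5) by the same device of passing through $H^2$. Given a closed subspace $M\subset H^p$ invariant under $H^\infty(B)$, in particular $M$ is invariant under $H_1^\infty(B)$, so Theorem 5 applies unless $M$ happens to be invariant under $H^\infty(B)$ — which here it is by hypothesis. So the first move is: if we blindly applied Theorem 5 we would get a representation with the extra one-dimensional pieces $\langle\varphi_j\rangle$; the point is that full $B$-invariance forces $k=0$, i.e.\ those pieces are absent, leaving exactly $M=J_1H^p(B)\oplus\cdots\oplus J_rH^p(B)$. I would organize the argument by the same four cases as in Theorem 6.

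\textbf{The cases $1\leq p<2$ and $0<p<1$.} Here I would first show $M\cap H^2$ is dense in $M$ and nonzero, by exactly the outer-function multiplier construction used in the proof of Theorem 6 (write $f\in M$ as a product of finitely many $H^2$ functions after raising the outer part to a small power, expand each factor via Theorem \ref{thukral} as $\sum e_{j0}k_j(B^2(z))$, and multiply by $\exp$ of (minus) a small multiple of $|k_j|^{1/2}+i\,\widetilde{|k_j|^{1/2}}$ composed with $B^2$); the Dominated Convergence Theorem and the $B^2$-invariance (here even the $B$-invariance) of $M$ put the truncations back in $M$. Then $M\cap H^2$ is a closed $B$-invariant subspace of $H^2$, so by Theorem 2 it equals $J_1H^2(B)\oplus\cdots\oplus J_rH^2(B)$ for $B$-inner $J_1,\dots,J_r$ with $B$-inner $B$-matrix. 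Taking $H^p$-closures gives $M=\overline{J_1H^2(B)\oplus\cdots\oplus J_rH^2(B)}$. The containment $J_1H^p(B)\oplus\cdots\oplus J_rH^p(B)\subset M$ follows by approximating $f\in H^p(B)$ by Cesàro means $p_n$ and noting $p_n(B)J_i\in M$ by $B$-invariance, $p_n(B)J_i\to J_if(B)$ in $H^p$. For $0<p<1$ this inclusion, together with density of $M\cap H^2$, already yields that the right-hand side is dense in $M$, which is the asserted form in that range. For $1\leq p<2$ I would finish exactly as in the $1\leq p<2$ case of Theorem 6: for $f\in M$ construct an outer $K\in H^\infty$ with $Kf\in M\cap H^2=\bigoplus J_ih_i$, expand $f=\sum e_{i0}f_i$ via Lemma 2, expand each $J_i$ in the $e_{j0}$-basis with unitary $B$-matrix, match coefficients, and multiply the coefficient matrix identities to obtain $\sum|h_j/K|^2=\sum|f_i|^2$, hence $|h_j/K|\leq\sum|f_i|$, so $h_j/K\in H^p$ and (since $K$ is outer) $h_j/K\in H^p(B)$; this shows $f\in\bigoplus J_iH^p(B)$, giving the reverse inclusion and equality.

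\textbf{The case $1<p\leq\infty$.} Let $M_1=\overline{M}$ in $H^2$; it is a closed $B$-invariant subspace of $H^2$, so $M_1=\varphi_1H^2(B)\oplus\cdots\oplus\varphi_nH^2(B)$ with $\varphi_i$ $B$-inner, by Theorem 2. For $f\in M$, write $f=\sum\varphi_ih_i$ with $h_i\in H^2(B)$; by Lemma 4 (or Lemma 5/the argument using Lemma 3 in Theorem 6) each $h_i\in H^p(B)$. As in Theorem 6 one then shows $\varphi_ih_i\in M$ for each $i$ — using the Lemma 3 multiplier sequences $g_l^{(k,j)}$, composing appropriate ones with $B^2$, and invoking $H^\infty(B)$-invariance of $M$ — so $M=\varphi_1N_1\oplus\cdots\oplus\varphi_nN_n$ with $N_i=\{h\in H^p(B):\varphi_ih\in M\}$ a closed subspace, dense in $H^2(B)$, and now (crucially, because $M$ is invariant under $B$ itself, not merely under $B^2,B^3$) each $N_i$ is genuinely $B$-invariant. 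A $B$-invariant closed subspace of $H^p(B)$ which is dense in $H^2(B)$ must be all of $H^p(B)$: indeed its $H^2$-closure is a $T_z$-invariant subspace of $H^2$ equal to all of $H^2$ (using that $B^2,B^3$-invariance plus $B$-invariance gives $z$-invariance after the isometric identification $H^2(B)\cong H^2$), so by Beurling the closure is $H^2(B)$; then one runs the same outer-multiplier and coefficient-matching argument as above with a single $B$-inner factor $\varphi_i$ (which has $\sum|\varphi_{ij}|^2=1$) to conclude $N_i=H^p(B)$. Hence $M=\varphi_1H^p(B)\oplus\cdots\oplus\varphi_nH^p(B)$; renaming and discarding any zero summands (or rather, $n$ here is already the number of nonzero $B$-inner generators $r\leq n$ coming from Theorem 2) gives the stated conclusion.

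The main obstacle I expect is the $1<p\leq\infty$ case, specifically proving $\varphi_ih_i\in M$ and, more delicately, that a $B$-invariant subspace of $H^p(B)$ dense in $H^2(B)$ is all of $H^p(B)$: for $p=\infty$ one must work with weak-star convergence rather than norm convergence in Lemma 3, and the Cesàro/multiplier approximations have to be handled accordingly. The coefficient-matrix computation ($\sum|h_j/K|^2=\sum|f_i|^2$ via $A^\ast A=I$ where $A$ is the $B$-matrix) is the technical heart but is routine once set up; the genuine content is the soft topological fact that $B$-invariance (not just $B^2,B^3$-invariance) kills the finite-dimensional "defect" $\bigoplus\langle\varphi_j\rangle$ that appears in Theorem 6, which is exactly what distinguishes this theorem's clean conclusion from the previous one.
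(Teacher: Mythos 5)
Your proposal is correct and follows essentially the same route as the paper's own proof: intersect with $H^{2}$ (for $p<2$) or close in $H^{2}$ (for $p>2$), invoke Theorem 2, and transfer back via the outer-multiplier constructions, Lemma 3, Lemma 4, and the $B$-matrix coefficient computation. The only cosmetic difference is that in the $2<p\leq \infty$ case you reintroduce the subspaces $N_{i}$ and a separate density argument (including a superfluous appeal to Beurling's theorem), where the paper simply shows $J_{k}H^{p}\left( B\right) \subset M$ directly by the same multiplier argument used to prove $\Phi _{k}\in M$ in the preceding theorem.
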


\begin{proof}
The idea of the proof is quite similar to the proof of the Theorem 3. We
shall only sketch the details. Using the fact that every $0\neq f\in H^{p}$
can be written as a product of an appropriate number of $H^{2}$ functions,
we can construct an outer function $O(z)$, in a manner identical to the
proof of Theorem 3, such that $O(B(z))f\in M\cap H^{2}$. Thereby
establishing that $M\cap H^{2}\neq \{0\}$. Now $M\cap H^{2}$ is a closed
subspace of $H^{2}$ and invariant under $H^{\infty }(B)$, so by Theorem 2
there exist $B-$ inner functions $J_{1},\ldots ,J_{r}$, with $r\leq n$, such
that 
\begin{equation*}
M\cap H^{2}=J_{1}H^{2}\left( B\right) \oplus \cdots \oplus J_{r}H^{2}\left(
B\right) .
\end{equation*}

\noindent Next we show that $M\cap H^{2}$ is dense in $M$. It is trivial to
note that $\overline{M\cap H^{2}}\subseteq M$, where the bar denotes closure
in $H^{p}$. In order to establish the reverse inequality, we follow the same
arguments used in the proof of Theorem 3 to construct a sequence of outer
functions $\{O_{l}(z)\}_{l=0}^{\infty }$, such that for any $f\in M$, $%
O_{l}(B(z))f\in M\cap H^{2}$, and $O_{l}(B(z))f\rightarrow f$ in $H^{p}$, as 
$l\rightarrow \infty $. The proof of Theorem 3 also establishes that, for $%
0<p<1$, $\overline{M\cap H^{2}}$ takes the form $\overline{J_{1}H^{p}\left(
B\right) \oplus \cdots \oplus J_{r}H^{p}\left( B\right) \text{ }}$ and for $%
1\leq p<2,$ $\overline{M\cap H^{2}}=$ $J_{1}H^{p}\left( B\right) \oplus
\cdots \oplus J_{r}H^{p}\left( B\right) $ thereby establishing the
characterization for $M$ in these cases. Next we deal with the case when $%
2<p\leq \infty .$ As in the proof of Theorem 3, we consider $M_{1}=\overline{%
M}$, the closure of $M$ in $H^{2}$. Since $M$ is $B-$ invariant, we have $%
M_{1}$ is $B-$ invariant. So by Theorem 2 there exist $B-$ inner functions $%
J_{1},\ldots ,J_{r}$, $r\leq n$, such that 
\begin{equation*}
M_{1}=J_{1}H^{2}(B)\oplus \cdots \oplus J_{r}H^{2}(B).
\end{equation*}%
Thus any arbitrary $f\in M$ can be written as $f=J_{1}h_{1}+\cdots
+J_{r}h_{r}$, for some $h_{1},\ldots ,h_{r}\in H^{2}$. By Lemma $4$, these $%
h_{1},\ldots ,h_{r}\in H^{p}$, and hence $M\subset J_{1}H^{2}(B)\oplus
\cdots \oplus J_{r}H^{2}(B)$. To prove the inclusion in the reverse, we need
to establish that, for each $k=1,\ldots ,r$, $J_{k}H^{p}(B)\subset M$. For
an arbitrary $h\in H^{p}(B)$, consider $\Psi _{k}=J_{k}h$. Now proceeding in
the same fashion as in the proof of Theorem 3 (where we show that $\Phi
_{k}\in M$), it follows that $\Psi _{k}\in M$.
\end{proof}

\begin{acknowledgement}
The second author thanks Vern Paulsen for useful discussions. The first
author thanks the Shiv Nadar University, Dadri, Uttar Pradesh, and the
Mathematical Sciences Foundation, New Delhi, for the facilities given to
complete this work.
\end{acknowledgement}

\end{document}